\newtheorem{proposition}{Proposition}[section]
\newtheorem{theorem}[proposition]{Theorem}
\newtheorem{lemma}[proposition]{Lemma}
\theoremstyle{definition}\newtheorem{definition}[proposition]{Definition}
\newtheorem{remark}[proposition]{Remark}
\newtheorem*{theorem*}{Theorem}
\newtheorem{hypothesis}{Hypothesis}
\numberwithin{equation}{section}
\newcommand{\email}[1]{\href{mailto:#1}{#1}}
\newcommand{\eps}{\varepsilon}
\newcommand{\G}{\mathbb{G}}
\newcommand{\p}{\mathbb S}
\newcommand{\Ls}{\ensuremath{\mathrm{L}}}
\newcommand{\Green}{\mathcal{G}}
\newcommand{\Heat}{\mathcal{H}}
\newcommand{\Semi}{\mathcal{S}}
\DeclareMathOperator{\Laplace}{\mathcal L}
\DeclareMathOperator{\RLInt}{\mathcal I}
\newcommand{\bG}{\mathbb{G}}
\newcommand{\bP}{\mathbb{P}}
\newcommand{\bS}{\mathbb{S}}
\newcommand{\cH}{{\mathcal H}}
\newcommand{\cP}{{\mathcal P}}
\newcommand{\cS}{{\mathcal S}}
\DeclareMathOperator{\sign}{sign}
\newcommand{\angles}[1]{\left\langle#1\right\rangle}
\newcommand{\ceil}[1]{\left \lceil #1 \right \rceil}
\newcommand{\norm}[2][]{\left\|#2\right\|_{#1}}
\DeclareMathOperator{\dist}{dist}
\newcommand{\caputo}[2][]{\prescript{C}{#1}{\partial}_{#2}^\alpha}
\newcommand{\riemann}[2][]{\prescript{R}{#1}{\partial}_{#2}^\alpha}
\newcommand{\riemannOrder}[3][]{\prescript{R}{#1}{\partial}_{#2}^{#3}}
\newcommand{\dtfrac}[2][]{\prescript{\bullet}{#1}{\partial}_{#2}^\alpha}
\newcommand*\diff{\mathop{}\!\mathrm{d}}
\newcommand{\defeq}{\vcentcolon=}
\title{Singular solutions for space-time \\fractional equations in a bounded domain}
\author{%
	Hardy Chan\thanks{Department of Mathematics and Computer Science, University of Basel.
	\email{hardy.chan@unibas.ch}}
    \and
    David G\'omez-Castro\thanks{Instituto de Matemática Interdisciplinar, Universidad Complutense de Madrid.
    	\email{dgcastro@ucm.es}}
    \and	
    Juan Luis V\'{a}zquez\thanks{Depto. de Matem\'aticas
    	Univ. Aut\'onoma de Madrid.
    	\email{juanluis.vazquez@uam.es}}
}
\begin{document}
	\maketitle

\begin{abstract}
This paper is devoted to describing a linear diffusion problem involving fractional-in-time derivatives and self-adjoint integro-differential space operators posed in bounded domains. One main concern of our paper is to deal with singular boundary data which are typical of fractional diffusion operators in space, and the other one is the consideration of the fractional-in-time Caputo and Riemann--Liouville derivatives in a unified way.
We first construct classical solutions of our problems using the spectral theory and discussing the corresponding fractional-in-time ordinary differential equations. 
We take advantage of the duality between these fractional-in-time derivatives to introduce the notion of weak-dual solution for weighted-integrable data.
As the main result of the paper, we prove the well-posedness of the initial and boundary-value problems in this sense. 
\end{abstract}

\section{Introduction}

This paper is devoted to describing a linear diffusion problem involving fractional-in-time derivatives and self-adjoint integro-differential space operators in bounded domains. 
More precisely, we want to extend the results in \cite{Chan+GC+Vazquez2020parabolic} to the fractional-in-time setting. Let $\Omega$ be a bounded smooth domain of $\mathbb R^d$ and $\Ls$ be an elliptic operator of order $2s\in(0,2)$ as in \cite{Chan+GC+Vazquez2020parabolic}. The precise hypotheses of $\Ls$ are made on its Green and heat kernels in \Cref{sec:main results}. Consider
\begin{gather}
	\tag{P$_\bullet$}
	\label{eq:main}
	\begin{dcases}
		\prescript{\bullet}{}\partial_t^{\alpha}u(t,x)
		+\Ls u(t,x)=f(t,x),
		& x\in\Omega, t\in(0,T),\\
		u(t,x)=0,
		& x\in\Omega^{c}, t>0,\\
		\lim_{x \to \zeta} \frac{  u(t,x) }{u^\star(x)} =h(t,\zeta),
		& \zeta\in\partial\Omega, t>0,
	\end{dcases}
\end{gather}
where $u^\star$ is a canonically-chosen representative of a class of solutions of $\Ls u = 0$ which are singular on the boundary, which we will explain below after the statement of \Cref{thm:h = 0}, and $\prescript{\bullet}{}\partial_t^\alpha$ ($0< \alpha < 1$) is either the Caputo derivative $\caputo{t}$ or the Riemann--Liouville derivative $\riemann t$, which are defined as follows:
\begin{equation*}
	\caputo{t} u (t,x)
	:=\tfrac{1}{\Gamma(1-\alpha)}
	\int_{0}^t
	\dfrac{(\partial_t u)(\tau,x)}{(t-\tau)^{\alpha}}
	\diff \tau
\end{equation*}
and 
\begin{equation*}
	\riemann{t} u (t,x)
 :=  \tfrac{1}{\Gamma(1-\alpha)} \frac{\partial}{\partial t} \left(  \int_0^t \frac{u(\tau,x)}{(t - \tau)^{\alpha}}  d \tau \right).
\end{equation*}
These derivatives can be started at a time $t_0\neq 0$, in which case they are denoted by $\caputo[t_0]{t}$ and $\riemann[t_0]{t}$. Some authors drop $R$ from the Riemann--Liouville derivative, but we will keep it for clarity. 

The initial conditions are a little trickier, as they depend on the type of time derivative. We will explain this below. For the Caputo derivative we have simply
\begin{gather}
	\tag{IC$_C$}
			\label{eq:IC Caputo}
			u(0,x)=u_0(x), \qquad x\in\Omega.
\end{gather}
For the Riemann--Liouville derivative, however, we set the initial condition
\begin{gather}
		\tag{IC$_R$}
		\label{eq:IC Riemann}
		\lim_{h \to 0^+} \riemannOrder{t}{\alpha-1} u (h,x)  =u_0(x), \qquad  x\in\Omega.
\end{gather}
where we define for the range $\alpha - 1 \in (-1,0)$
\begin{equation*} 
	 \riemannOrder{t}{\alpha-1} u (t,x) 
  := 
	 \tfrac{1}{\Gamma (1-\alpha )}   \int_{0}^t (t - \xi )^{-\alpha} u (\xi,x)  d \xi . 
\end{equation*}
This seemingly strange initial condition is motivated and justified by the Laplace transform in \eqref{eq:Riemann Laplace}.

\bigskip 

Using the notation $\bullet \in \{C,R\}$ we will denote the solution above by 
\begin{equation} 
	\label{eq:H definition}
		u(t,x)=\Heat_\bullet[u_0,f,h].
\end{equation}

There have been a significant number of previous works for this family of problems.
The problem with no boundary data  $\mathcal H_C [u_0, f, 0]$ for good data $u_0,f$ has been studied in Gal and Warma in \cite{Gal2020} for general operators $\Ls$. The aim of this paper is to consider jointly the evolution problems with Caputo and Riemann--Liouville time derivative, and exploit the existing duality between them. Moreover, we consider the problems with singular spatial boundary data $h \ne 0$, which is only known for $\alpha = 1$ (see \cite{Chan+GC+Vazquez2020parabolic}).

\bigskip 

Our aim in this paper is to prove existence and uniqueness of suitable solutions of the problem up to finite time. 
Our main results are presented and explained in the next section. In \Cref{thm:h = 0 L2} we prove well-posedness and a representation formula of spectral-type solutions for smooth data $u_0,f$ and $h = 0$. In \Cref{thm:h = 0} we show that this representation is also valid for weighted-integrable data $u_0, f$ and $h = 0$, and provide a weak notion of solution with uniqueness. Lastly, in our main result, \Cref{thm:h ne 0}, we show how the previously introduced functions concentrate towards the boundary to construct solutions of the general case with $h \ne 0$, and give a suitable notion for which they are also unique.

\bigskip 

There has also been progress in other directions. Let us mention that the asymptotic behaviour for $t \to \infty$ in the whole space was considered by \cite{CortazarQuirosWolanski2021,CortazarQuirosWolanski2023Preprint}.

\section{Main results, structure of the paper, and comparison with previous theory} 
\label{sec:main results}

Recalling the theory of elliptic problems, there is a long list of paper dealing with the continuous and bounded solutions of the elliptic problem
\begin{equation}
	\label{eq:elliptic}
	\begin{dcases}
		\Ls U = f, & x \in \Omega, \\ 
		U = 0, & x \in \partial \Omega \, ( \textrm{resp. } \mathbb R^d \setminus \overline \Omega).
	\end{dcases}
\end{equation}
For a general class of integro-differential operators, it is proven \cite{AbatangeloGomezCastroVazquez2019} that there are sequences $f_j $ with support concentrating towards the boundary such that $U_j \to u$, a non-trivial solution to $\Ls u = 0$ in $\Omega$. We pick $u^\star$ a canonical representative of this class, which we will explain below after the statement of \Cref{thm:h = 0}. In the case of the classical Laplacian, one such example is $u^\star = 1$, i.e. one recovers the solution of the non-homogeneous Dirichlet problem.
Letting $\delta(x) = \dist(x, \partial \Omega)$, for the Restricted Fractional Laplacian we recover solutions of the form $u^\star \asymp \delta^{-s}$ whereas for the Spectral Fractional Laplacian $u^\star \asymp \delta^{-2(1-s)}$. 
In \cite{AbatangeloGomezCastroVazquez2019} (see also \cite{abatangelo2015LargeHarmonicFunctions,AbatangeloDupaigne2017}) the authors proved that the additional condition $U/u^* = h$ can be added on the spatial boundary $\partial \Omega$.  
In \cite{Chan+GC+Vazquez2020parabolic} we showed that in this condition can also be added in the parabolic problem
\begin{equation}
	\label{eq:parabolic}
	\begin{dcases}
		\frac{\partial V}{\partial t} + \Ls V = 0, & t> 0 \text{ and }x \in \Omega, \\ 
		V(t,x) = 0, & x \in \partial \Omega \, ( \textrm{resp. } \mathbb R^d \setminus \overline \Omega),\\
		V(0,x) = u_0 (x), & x \in \Omega.
	\end{dcases}
\end{equation}
In this paper we show that non-local-in-time problems also admit the additional singular (or non-singular) boundary condition
\begin{equation}
	\tag{BC}
	\label{eq:singular boundary condition 2}
	\lim_{x \to \zeta} \frac{u(t,x)}{u^\star(x)} = h(\zeta).
\end{equation}

We make the following assumptions on $\Ls$ throughout the paper. We assume that, for every $f \in L^\infty(\Omega)$, \eqref{eq:elliptic} has a unique bounded solution and it is given by integral against the so-called \textit{Green kernel} $\mathbb G$ in the sense that
$$
	U(x) = \int_\Omega \G(x,y) f(y) \diff y.
$$
We denote $\Green[f] = U$.
As in  \cite{Chan+GC+Vazquez2020parabolic}, we will make the following assumptions on $\G$:
\begin{hypothesis}[Fractional structure of the Green function]
~
\begin{itemize}
	\item The Green operator $\Green=\Ls^{-1}$ admits a symmetric kernel $\G(x,y) = \G(y,x)$ with two-sided estimates
	\begin{equation} 
	\tag{G1}
	\label{eq:Green estimates}
	    \G (x,y) \asymp \frac{1}{|x-y|^{d-2s}} \left( 1 \wedge \frac{\delta(x)}{|x-y|} \right)^\gamma  \left( 1 \wedge \frac{\delta(y)}{|x-y|} \right)^\gamma,
	    \qquad x,y\in\Omega,\quad x\neq y,
	\end{equation}
	where $s,\gamma\in(0,1]$.

        \item The Martin kernel, or the $\gamma$-normal derivative of the Green kernel, exists (and therefore enjoys the two-sided estimates):
	\begin{equation}
	\tag{G2}
	\label{eq:Martin estimates}
	    D_\gamma\bG(x,\zeta) 
	    :=
	    \lim_{\Omega\ni y\to\zeta}
	    \frac{\bG(x,y)}{\delta(y)^\gamma}
	    \asymp
	    \frac{\delta(x)^\gamma}{|x-\zeta|^{d-2s+2\gamma}},
	    \qquad x\in\Omega,\quad \zeta\in\partial\Omega,
	\end{equation}
	\item $\Ls$ enjoys the boundary regularity that
	\begin{equation}
	    \tag{G3}
	    \label{eq:G boundary regularity}
	    \Green:\delta^\gamma L^\infty(\Omega) \to \delta^\gamma C(\overline{\Omega}).
	\end{equation}
\end{itemize}
\end{hypothesis}

\begin{remark}[Spectral decomposition]
	\label{rem:spectral}
    Given \eqref{eq:Green estimates}, by the Hille--Yosida theorem $\Ls$ generates a heat semigroup $\Semi(t)$  that solves \eqref{eq:parabolic} (see \cite{BFV2018}). Furthermore, it formally admits an $L^2(\Omega)$ spectral decomposition
	with an orthogonal basis of eigenfunctions $\varphi_j$ with eigenvalues $\lambda_j$
	$$
	\Ls \varphi_j = \lambda_j \varphi_j 
	$$
	The rigorous approach is that $\varphi_j$ form the eigenbasis of $\Green$.
\end{remark}
We make further assumptions on the heat semigroup $\Semi$:
\begin{hypothesis}
	[$\Ls$ generates a submarkovian semigroup $\Semi(t)$]
	\begin{equation}
		\label{eq:submarkovian}
		\tag{S1}
		0 \leq u_0\leq 1
		\implies
		0\leq \Semi(t)[u_0] \leq 1.
	\end{equation}	
\end{hypothesis}
In \cite{Chan+GC+Vazquez2020parabolic}, under these assumptions we have proven that the heat kernel $\bS(t,x,y)$ exists, i.e. for every $u_0 \in L^\infty(\Omega)$ there exists a unique bounded solution of \eqref{eq:parabolic} expressible by
$$
V(t,x) = \int_\Omega \bS(t,x,y) u_0(y) \diff y.
$$
In \cite{Chan+GC+Vazquez2020parabolic} we also proved that $\bS$ admits a $\gamma$-normal derivative, certain estimates near the diagonal of $\bS$ and $D_\gamma \bS$, and a one-sided Weyl-type law for the eigenvalues of $\Ls$. 

\bigskip 

Due to \Cref{rem:spectral}, we can perform the spectral decomposition
\begin{equation}
	\label{eq:spectral decomposition}
	u(t,x) = \sum_{j=1}^\infty u_j(t) \varphi_j(x),
 \qquad
 f(t,x) = \sum_{j=1}^\infty f_j(t) \varphi_j(x),
\end{equation}
for $u_j(t) = \langle u(t,\cdot), \varphi_j \rangle$ and $f_j(t)  = \langle f(t,\cdot) , \varphi_j\rangle$. Thus
\eqref{eq:main} can be rewritten in the eigenbasis as
\begin{equation*}
	\sum_{j = 1}^\infty \Big( \dtfrac t u_j (t) + \lambda_j u_j (t) -  f_j(t) \Big) \varphi_j (x) = 0.
\end{equation*}

We devote \Cref{sec:Caputo and Riemann derivatives} to the study of the ordinary fractional-in-time equations
\begin{equation}
	\label{eq:spectral decomposition ODE}
	\dtfrac t u_j (t) + \lambda_j u_j (t) = f_j (t)
\end{equation}
with the suitable initial conditions.
As in \cite{Gal2020}, this spectral analysis leads to the construction of the kernels
\begin{align}
\label{eq:Salpha from S}
	\bS_\alpha(t,x,y)
	&=\int_0^\infty
	\Phi_\alpha(\tau)
	\bS(\tau t^\alpha,x,y)
	\diff \tau,\\
\label{eq:Palpha from S}
	\bP_\alpha(t,x,y)
	&=\alpha t^{\alpha-1}
	\int_0^\infty
	\tau\Phi_\alpha(\tau)
	\bS(\tau t^\alpha,x,y)
	\diff \tau,
\end{align}
where $\Phi_\alpha$ is the well-known Mainardi function given in \eqref{eq:Mainardi}. 
The associated integral operators are:
\begin{align*}
	\cS_\alpha(t)[u_0](x)
	&=\int_{\Omega}
	\bS_\alpha(t,x,y)
	u_0(y)
	\diff y, 
	\qquad 
	\cP_\alpha(t)[f](x)
	=\int_{\Omega}
	\bP_\alpha(t,x,y)
	f(y)
	\diff y.
\end{align*}
This analysis works for $h = 0$. To deal with the case of non-trivial singular boundary data $h \ne 0$ we need to introduce the notation for the $\gamma$-normal derivatives of $\cP_\alpha$ and $\bP_\alpha$:
\begin{align*}
	D_\gamma\cP_\alpha[h](x)
	&=\int_{\partial\Omega}
	D_\gamma\bP_\alpha(t,x,\zeta)
	h(\zeta)
	\diff \zeta,\\
	D_\gamma\bP_\alpha(t,x,\zeta)
	&=\alpha t^{\alpha-1}
	D_\gamma
    \bigg[\int_0^\infty
	\tau\Phi_\alpha(\tau)
	\bS(\tau t^\alpha,x,\cdot)
	\diff \tau
    \bigg](\zeta).
\end{align*}
Finally, we propose as solution to  \eqref{eq:main} the function $u = \Heat_{\bullet}[u_0,f,h]$ given by
\begin{equation} 
	\label{eq:H representation}
\begin{aligned}
	\Heat_C[u_0,f,h](t)
	&=\Semi_\alpha(t)[u_0]
	+\int_0^t\cP_\alpha(t-\tau)[f(\tau)]\diff\tau
	+\int_0^t D_\gamma\cP_\alpha(t-\tau)[h(\tau)]\diff\tau,\\
	\Heat_R[u_0,f,h](t)
	&=\cP_\alpha(t)[u_0]
	+\int_0^t\cP_\alpha(t-\tau)[f(\tau)]\diff\tau
	+\int_0^t D_\gamma\cP_\alpha(t-\tau)[h(\tau)]\diff\tau.
\end{aligned}
\end{equation} 
Notice that the choice of Caputo or Riemann--Liouville derivative only affects the initial condition, in the sense that
$$\Heat_C[0,f,h] = \Heat_R[0,f,h].$$
Hence, when $u_0  = 0$ we drop the sub-index $C$ or $R$ and denote simply $u = \Heat[0,f,h]$. 
We point out that the super-position principle (i.e. linearity) means that
$$
\Heat_\bullet [u_0,f,h] = 	\Heat_\bullet [u_0,0,0] + 	\Heat [0,f,0] + 	\Heat [0,0,h].
$$

We make some further technical assumptions which are needed below in this paper:
\begin{hypothesis}[Off-diagonal bound on the heat kernel $\bS$]
        \begin{equation}
        \tag{S2}
        \label{eq:heat-off-diag}
        \frac{\bS(t,x,y)}{\delta(x)^\gamma}
        \leq C(\eps)
         \quad \text{ for } \quad
        |x-y| \geq \eps,   t \ge 0.
        \end{equation}
\end{hypothesis} 
\begin{hypothesis}[Uniform exchange of limits between integral and $D_\gamma$] We assume that $\bP_{\alpha}$ has the following properties:
    \begin{equation}
        \tag{E}
        \label{eq:exchange}
        \begin{aligned} 
        D_\gamma \mathbb P_\alpha (t,x, \zeta) &= \alpha t^{\alpha-1} \int_{0}^{\infty} \tau \Phi_\alpha(\tau)  D_\gamma  \p  (\tau t^{\alpha},x,\zeta)\diff \tau
        \\ 
        D_\gamma \mathbb G(x,\zeta) &= \int_0^\infty D_\gamma \p(t,x,\zeta) \diff t.
        \end{aligned} 
    \end{equation}
    where $\Phi_\alpha$ is the Mainardi function given in \eqref{eq:Mainardi}. 
\end{hypothesis}

\begin{remark} 
We remark that \eqref{eq:Green estimates} implies 
(see \cite{AbatangeloGomezCastroVazquez2019}) that
\begin{equation}
    \label{eq:Green AGV}
    \sup_{x\in\Omega}
    \int_\Omega
    \bigg(
        \frac{\bG(x,y)}{\delta(x)^\gamma}
        \delta(y)^\gamma
    \bigg)^p
    \diff y
\leq C,
\end{equation}
for some $p>1$. Moreover, under \eqref{eq:Martin estimates} and \eqref{eq:exchange},
\[
u^{\star}(x)
=\int_{\partial \Omega}  D_\gamma \mathbb G(x, \zeta) \diff \zeta
=\int_0^\infty \int_{\partial \Omega}  D_\gamma \p (\tau,x, \zeta) \diff \zeta \diff \tau.
\]
\end{remark}

In order to develop a theory of \emph{classical} boundary singular solutions for time-fractional equations, we impose the following extra hypothesis:
\begin{hypothesis} [Uniform control of the time tail of $D_\gamma \bS$ near $\partial \Omega$]
\begin{equation}
    \tag{S3}
    \label{eq:H**}
    \text{For any $\delta > 0$ and $\zeta_0 \in \partial \Omega$, } \lim_{x \to \zeta_0} \int_0^\infty
	\Phi_\alpha(\tau) 
	\dfrac
	{  \int_{\tau \delta^\alpha}^\infty  \int_{\partial \Omega}   D_\gamma  \p  (\sigma,x,\zeta) \diff \zeta \diff \sigma }
	{  \int_0^\infty \int_{\partial \Omega}  D_\gamma \p (\tilde\tau,x, \tilde \zeta) \diff \tilde \zeta \diff \tilde\tau  } 
	\diff \tau = 0 .
\end{equation}
\end{hypothesis}

\bigskip 

In \Cref{sec:h=0 L2} we develop the $L^2$ theory using spectral analysis. For this we define the natural energy space
$$
	\mathrm H (\Omega) = \left \{   u \in L^2 (\Omega) : \| \Ls u \|_{L^2(\Omega)}^2 = \sum_{j=1}^{\infty} \lambda_j^2 \langle u, \varphi_j \rangle^2 < \infty   \right \}.
$$
The well-posedness result is the following.
\begin{theorem}
	\label{thm:h = 0 L2}
	Assume \eqref{eq:Green estimates}, \eqref{eq:Martin estimates}, \eqref{eq:G boundary regularity}, and \eqref{eq:submarkovian}, $u_0 \in \mathrm H (\Omega)$, $f \in C^1([0,T]; L^2 (\Omega))$, and $h = 0$. Then, 
	\begin{itemize}
		\item Caputo derivative case: There is a unique function 
		$$
			u \in C([0,T]; L^2(\Omega)) \cap C((0,T]; \mathrm H (\Omega)) \cap C^1((0,T];L^2(\Omega))
		$$ 
		that satisfies of (P$_C$) in the spectral sense \eqref{eq:spectral decomposition ODE} together with the initial condition \eqref{eq:IC Caputo}, i.e. $u(0,\cdot) = u_0$.

		\item Riemann--Liouville derivative case: There is a unique function $u$ such that 
		$$
			t^{1-\alpha} u \in C([0,T]; L^2(\Omega)) \cap C((0,T]; \mathrm H (\Omega)) \cap C^1((0,T];L^2(\Omega)),
		$$
		and $u$ satisfies of (P$_R$) in the spectral sense \eqref{eq:spectral decomposition ODE} together with the initial condition \eqref{eq:IC Riemann}.
	\end{itemize}
	In each case, this solution is given by $u=\Heat_{\bullet}[u_0,f,0]$ as in \eqref{eq:H representation}. 
\end{theorem}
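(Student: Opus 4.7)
The plan is to use spectral decomposition to reduce the problem to countably many scalar fractional ODEs, solve each explicitly via Mittag--Leffler functions, and reassemble the resulting series both to produce the claimed regularity and to identify it with the kernel representation $\Heat_\bullet[u_0,f,0]$. Writing $u_{0,j}=\langle u_0,\varphi_j\rangle$ and $f_j(t)=\langle f(t,\cdot),\varphi_j\rangle$, the hypothesis $u_0\in\mathrm H(\Omega)$ gives $\sum_j\lambda_j^2 u_{0,j}^2<\infty$, while $f\in C^1([0,T];L^2(\Omega))$ provides Parseval-controlled summability of $\sum_j\|f_j\|_{C^1([0,T])}^2$. Testing \eqref{eq:main} against $\varphi_j$ turns it into \eqref{eq:spectral decomposition ODE} on each coordinate, with the initial condition inherited from \eqref{eq:IC Caputo} or \eqref{eq:IC Riemann}.

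First I would invoke the scalar theory developed in \Cref{sec:Caputo and Riemann derivatives} to produce the explicit formula for each $u_j$. In the Caputo case this is
\[
u_j(t) = E_\alpha(-\lambda_j t^\alpha)\,u_{0,j} + \int_0^t (t-\tau)^{\alpha-1}E_{\alpha,\alpha}(-\lambda_j(t-\tau)^\alpha)\,f_j(\tau)\,d\tau,
\]
and in the Riemann--Liouville case the first term is replaced by $t^{\alpha-1}E_{\alpha,\alpha}(-\lambda_j t^\alpha)\,u_{0,j}$; condition \eqref{eq:IC Riemann} is read off coordinate-wise from $\mathcal I^{1-\alpha}[\tau^{\alpha-1}E_{\alpha,\alpha}(-\lambda_j\tau^\alpha)](t)=E_\alpha(-\lambda_j t^\alpha)\to 1$ as $t\to 0^+$. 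The classical bounds $0\le E_\alpha(-\lambda t^\alpha)\le C/(1+\lambda t^\alpha)$ and $0\le t^{\alpha-1}E_{\alpha,\alpha}(-\lambda t^\alpha)\le C\,t^{\alpha-1}/(1+\lambda t^\alpha)$ then supply the uniform-in-$j$ control needed for the next step.

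Next I would verify the regularity of $u(t,x)=\sum_j u_j(t)\varphi_j(x)$ by dominated convergence at the level of Fourier coefficients. For Caputo, $|E_\alpha|\le 1$ delivers $u\in C([0,T];L^2)$ and $u(0,\cdot)=u_0$; on $[\delta,T]$ the bound $\lambda_j E_\alpha(-\lambda_j t^\alpha)\lesssim t^{-\alpha}$ makes $\sum_j\lambda_j^2 u_j(t)^2$ uniformly summable, yielding $u\in C((0,T];\mathrm H(\Omega))$, and differentiating the Duhamel integral after one integration by parts (using $f_j'\in C$) gives $u\in C^1((0,T];L^2)$. For Riemann--Liouville the prefactor $t^{1-\alpha}$ cancels the $t^{\alpha-1}$ singularity of the first term and the same estimates apply to $t^{1-\alpha}u$. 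To identify $u$ with $\Heat_\bullet[u_0,f,0]$ I would then use the Mainardi subordination identities
\[
E_\alpha(-\lambda t^\alpha)=\int_0^\infty \Phi_\alpha(\tau)e^{-\lambda\tau t^\alpha}\,d\tau,
\qquad
t^{\alpha-1}E_{\alpha,\alpha}(-\lambda t^\alpha)=\alpha t^{\alpha-1}\!\int_0^\infty\!\tau\Phi_\alpha(\tau)e^{-\lambda\tau t^\alpha}\,d\tau,
\]
together with the spectral representation $\bS(t,x,y)=\sum_j e^{-\lambda_j t}\varphi_j(x)\varphi_j(y)$ and Fubini; this recovers the kernels $\bS_\alpha$ and $\bP_\alpha$ from \eqref{eq:Salpha from S}--\eqref{eq:Palpha from S}. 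Uniqueness is a one-line consequence: any solution $v$ in the stated regularity class has coordinates $v_j=\langle v,\varphi_j\rangle$ satisfying the same scalar Volterra equation, so $v_j=u_j$ for all $j$ and hence $v=u$.

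The main obstacle I expect lies on the Riemann--Liouville side: because the natural solution blows up like $t^{\alpha-1}$ at $t=0$, the initial condition \eqref{eq:IC Riemann} must be interpreted through $\mathcal I^{1-\alpha}$, and verifying that this limit holds in $L^2(\Omega)$ (not merely coordinate-wise) requires a uniform-in-$j$ dominated convergence argument using $|\mathcal I^{1-\alpha}[\tau^{\alpha-1}E_{\alpha,\alpha}(-\lambda_j\tau^\alpha)](t)|\le 1$ combined with $\sum_j u_{0,j}^2<\infty$. A secondary nuisance is establishing the $C^1$-in-time regularity of the Duhamel term, which needs the $C^1$ regularity of $f$ to shift one derivative off the singular Mittag--Leffler kernel before taking the spectral sum.
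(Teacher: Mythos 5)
Your proposal follows the same route as the paper: spectral decomposition into scalar fractional ODEs, explicit Mittag--Leffler solutions, regularity from the bounds \eqref{eq:ML-bound}, identification with $\Heat_\bullet[u_0,f,0]$ via the Mainardi subordination identities \eqref{eq:Wright and Mittag-Leffler 1}--\eqref{eq:Wright and Mittag-Leffler 2}, and uniqueness from the scalar Volterra theory. The only cosmetic difference is that you make the integration-by-parts argument for $C^1$ regularity of the Duhamel term explicit, whereas the paper defers to Gal--Warma \cite[Proposition 2.1.9]{Gal2020}.
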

The case of Caputo derivative is already covered in \cite[Theorem 2.1.7]{Gal2020} in a slightly different functional setting.

\bigskip 

In \Cref{sec:h=0 L1 and inf} we extend this theory for $h = 0$ outside the $L^2$ framework. To this end, we define a generalised notion of solution in the ``optimal'' domain of the Green kernel (the weighted space $L^1(\Omega, \delta^\gamma)$) and we prove uniform integrability estimates. 
We provide the following definition of weak-dual solution, which we justify by the ``duality'' between the Caputo and Riemann-Liouville derivatives presented in \Cref{sec:Caputo-Riemann duality}.
\begin{definition}
	\label{def:weak-dual}
	\begin{itemize} 
	\item We say that $u$ is weak-dual solution of (P$_C$) if $u \in L^1(0,T;L^1(\Omega,\delta^\gamma))$ and
    \begin{equation}
    \label{eq:very weak h = 0}
	\begin{aligned}
		\int_{0}^{T}\int_{\Omega}
		u(t,x)\phi(T-t,x)
		\diff x\diff t
		&=\int_\Omega u_0 (x) \Big(\riemannOrder{t}{\alpha-1} \mathcal H [0,\phi,0]\Big)(T,x) \diff x\\
		&\qquad
		+\int_0^T \int_\Omega f(t,x) \Heat [0,\phi,0](T-t,x) \, \diff x \diff t\\
		&\qquad
		+\int_0^T \int_{\partial \Omega} h(t, \zeta) D_\gamma \mathcal H[0,\phi,0] (T-t, \zeta) \diff \zeta \diff t,
	\end{aligned}
        \end{equation}
	for all $\phi\in \delta^\gamma L^\infty((0,T)\times\Omega)$. 
	
	\item We say that $u$ is weak-dual solution of (P$_R$) if $u \in L^1(0,T;L^1(\Omega,\delta^\gamma))$ and
	\begin{align*}
		\int_{0}^{T}\int_{\Omega}
		u(t,x)\phi(T-t,x)
		\diff x\diff t
		&=\int_\Omega u_0 (x) \cH[0,\phi,0](T,x) \diff x\\
		&\qquad
		+\int_0^T \int_\Omega f(t,x) \Heat [0,\phi,0](T-t,x) \, \diff x \diff t\\
		&\qquad
		+\int_0^T \int_{\partial \Omega} h(t, \zeta) D_\gamma \mathcal H[0,\phi,0] (T-t, \zeta) \diff \zeta \diff t.
	\end{align*}
 	for all $\phi\in \delta^\gamma L^\infty((0,T)\times\Omega)$. 

	\end{itemize} 
\end{definition}
For this notion of solution, we provide a well-posedness result:
\begin{theorem}
	\label{thm:h = 0}
Assume \eqref{eq:Green estimates}, \eqref{eq:Martin estimates}, \eqref{eq:G boundary regularity}, and \eqref{eq:submarkovian}, $u_0\in L^1(\Omega,\delta^\gamma)$, $f\in L^1(0,T;L^1(\Omega,\delta^\gamma))$, and $h= 0$. Then, \eqref{eq:main} admits a unique weak-dual solution and it is given by $u=\Heat_{\bullet}[u_0,f,0]$ as in \eqref{eq:H representation}. 
\end{theorem}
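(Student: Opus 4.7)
The plan is to verify \eqref{eq:H representation} directly: show that the candidate $u=\mathcal H_\bullet[u_0,f,0]$ belongs to $L^1(0,T;L^1(\Omega,\delta^\gamma))$ and satisfies the weak-dual identity in \Cref{def:weak-dual}, and then derive uniqueness from the fact that $\delta^\gamma L^\infty((0,T)\times\Omega)$ is, after time reversal, exactly the predual of $L^1((0,T)\times\Omega,\delta^\gamma)$. Since $h=0$, only the first two pieces of \eqref{eq:H representation} appear on the left-hand side, while the right-hand side still involves the full operator $\mathcal H[0,\phi,0]$ acting on test functions $\phi$.

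\textbf{Integrability and Fubini.} The cornerstone estimate is that, for every nonnegative $g\in L^1(\Omega,\delta^\gamma)$,
\begin{equation*}
\int_0^T\int_\Omega \mathcal P_\alpha(t)[g](x)\,\delta(x)^\gamma\diff x\diff t\le C\,\|g\|_{L^1(\Omega,\delta^\gamma)}.
\end{equation*}
I would obtain this from the subordination \eqref{eq:Palpha from S} via the change of variable $s=\tau t^\alpha$ in the time integral, which produces $\int_0^T\mathcal P_\alpha(t)[g]\diff t\le \Green[g]$ after using $\int_0^\infty\Phi_\alpha(\tau)\diff \tau=1$, followed by the weighted Green bound implicit in \eqref{eq:Green AGV}. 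An analogous estimate for $\mathcal S_\alpha(t)$ based on \eqref{eq:Salpha from S} disposes of the Caputo initial-data term. With these bounds, Fubini places $\mathcal H_\bullet[u_0,f,0]$ in $L^1(0,T;L^1(\Omega,\delta^\gamma))$. For the source contribution to the weak-dual identity, Fubini combined with the symmetry $\mathbb P_\alpha(t,x,y)=\mathbb P_\alpha(t,y,x)$ and the substitution $\tilde t=t-\tau$ recognizes the resulting inner integral as $\mathcal H[0,\phi,0](T-\tau,y)$, producing the $f$-term on the right-hand side. The Riemann--Liouville initial-data term is treated identically, producing the expected $\int_\Omega u_0(y)\mathcal H[0,\phi,0](T,y)\diff y$.

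\textbf{Caputo duality and uniqueness.} The Caputo initial-data term is the crux: the same Fubini/symmetry manipulation reduces the task to the pointwise kernel identity
\begin{equation*}
\mathcal S_\alpha(s)[g](y)=\tfrac{1}{\Gamma(1-\alpha)}\int_0^s(s-r)^{-\alpha}\mathcal P_\alpha(r)[g](y)\diff r,
\qquad s\in(0,T),
\end{equation*}
i.e.\ $\mathcal S_\alpha=\riemannOrder{t}{\alpha-1}\mathcal P_\alpha$ as time-operators on the spectral kernels. This is precisely the Caputo--Riemann duality developed in \Cref{sec:Caputo-Riemann duality}; it can be checked directly from \eqref{eq:Salpha from S}--\eqref{eq:Palpha from S} via the change of variable $r=\tau s^\alpha$ and $\int_0^\infty\Phi_\alpha=1$. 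Once existence is in place, uniqueness is immediate: if $w$ is the difference of two weak-dual solutions sharing the same data, then
\begin{equation*}
\int_0^T\int_\Omega w(t,x)\,\phi(T-t,x)\diff x\diff t=0
\end{equation*}
for every $\phi\in\delta^\gamma L^\infty((0,T)\times\Omega)$, and reversing time this class of test functions separates points in $L^1((0,T)\times\Omega,\delta^\gamma)$, so $w\equiv 0$. I expect the main obstacle to be the Caputo--Riemann kernel identity together with the weighted integrability needed to justify every application of Fubini; once that duality is in hand, the remainder is a mechanical computation.
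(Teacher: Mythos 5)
Your proposal is correct, but it follows a genuinely different route from the paper's. The paper proves the theorem in two stages: first it observes that for \emph{smooth} data the representation formula is a spectral solution (\Cref{thm:h = 0 L2}) and hence satisfies the weak-dual identity via the ODE-level Caputo/Riemann--Liouville duality of \Cref{sec:Caputo-Riemann duality}; then, for general $u_0\in L^1(\Omega,\delta^\gamma)$ and $f\in L^1(0,T;L^1(\Omega,\delta^\gamma))$, it approximates by smooth data, invokes the uniform space-time integrability of \Cref{lem:compactness} to get strong $L^1(0,T;L^1(\Omega,\delta^\gamma))$ convergence of the candidate solutions, and passes to the limit in the weak-dual formulation. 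Your proof instead verifies the weak-dual identity \emph{directly} for general data, by Fubini and the kernel symmetry, pushing everything through the single subordination identity $\mathcal S_\alpha=\riemannOrder{t}{\alpha-1}\mathcal P_\alpha$ (equivalently $E_\alpha(-\lambda s^\alpha)=\frac{1}{\Gamma(1-\alpha)}\int_0^s(s-r)^{-\alpha}r^{\alpha-1}E_{\alpha,\alpha}(-\lambda r^\alpha)\diff r$, cleanest via Laplace transform — your suggestion of obtaining it by a direct substitution $r=\tau s^\alpha$ in \eqref{eq:Salpha from S}--\eqref{eq:Palpha from S} does not collapse as stated, though the identity is correct). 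This avoids the approximation step entirely, at the cost of having to justify all Fubini interchanges on $L^1(\delta^\gamma)$ data, which you do correctly via $\int_0^\infty \mathcal P_\alpha(t)\diff t=\Green$ and \eqref{eq:Green AGV} (the content of \Cref{lem:D Pa 1}); note that the $\mathcal S_\alpha$ integrability you gloss as ``analogous'' is most cleanly obtained not from \eqref{eq:Salpha from S} directly (the change of variable does not reproduce $\Green$ there) but by feeding the subordination identity back in: $\int_0^T\mathcal S_\alpha(t)[g]\diff t\le \frac{T^{1-\alpha}}{\Gamma(2-\alpha)}\int_0^T\mathcal P_\alpha(r)[g]\diff r\le C_T\,\Green[|g|]$. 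Your uniqueness argument coincides with the paper's (test against $\chi_K\,\sign w(T-\cdot,\cdot)$). In short, the paper buys robustness from its already-developed $L^2$ theory and the compactness lemma, while your approach buys a self-contained, one-pass verification at the price of checking one more explicit kernel identity.
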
 

Finally, in \Cref{sec:h ne 0} we ``concentrate'' $f$ towards $\partial \Omega$ to construct solutions with $h \ne 0$. 
In \cite{AbatangeloGomezCastroVazquez2019} the authors construct a singular solution as follows. As the authors did in \cite{AbatangeloGomezCastroVazquez2019,Chan+GC+Vazquez2020parabolic} we define $A_j \defeq \{ 1/j < \delta(x) < 2/j\}$ and
\begin{equation*}
	f_j (x) \defeq \frac{|\partial \Omega|}{|A_j|} \frac{\chi_{A_j}}{\delta(x)^\gamma} .
\end{equation*}
Under our assumptions, it then allows to show that
\begin{equation*}
	\Ls^{-1} [f_j ] \to u^\star \qquad \text{ in } L^1_{\rm loc} (\Omega).
\end{equation*}
These canonical solutions have, in some sense, ``uniform'' boundary conditions. Under the assumption \eqref{eq:Green estimates}, the boundary blow-up rate is given by \cite[equation (4.2)]{AbatangeloGomezCastroVazquez2019}, namely
\begin{equation*}
	u^\star (x) \asymp
	\begin{dcases}
	\delta(x)^{2s-\gamma -1 },
	    & \gamma>s-\frac12,\\
	\delta(x)^\gamma(1+|\log\delta(x)|),
	    & \gamma=s-\frac12,\\
	\delta(x)^\gamma,
	    & \gamma<s-\frac12.
	\end{dcases}
\end{equation*}
Notice that in the classical case $\gamma = s = 1$ so we recover $\delta^0$. 
In particular, when $\Ls = -\Delta$, this yields that $u^\star \equiv 1$, the only solution of $-\Delta U = 0$ in $\Omega$ such that $u = 1$ on $\partial \Omega$. 
Passing to the limit in the weak formulation
\begin{equation*}
	\int_\Omega u_j \psi = \int_\Omega f_j \Ls^{-1}[\psi], \qquad \forall \psi \in L^\infty_{\rm c} (\Omega),
\end{equation*}
we recover that $u^\star$ satisfies the very weak formulation 
\begin{equation*}
	\int_\Omega u^\star \psi = \int_{\partial \Omega} \lim_{x \to \zeta} \frac {\Ls^{-1}[\psi](x)}{\delta(x)^\gamma}  \qquad \forall \psi \in L^\infty_{\rm c} (\Omega).
\end{equation*}
More general solutions are constructed by letting
\begin{equation*}
	f_j (x) \defeq \frac{|\partial \Omega|}{|A_j|} \frac{\chi_{A_j}}{\delta(x)^\gamma} h(P_{\partial \Omega} (x)),
\end{equation*}
where $P_{\partial \Omega}$ is the orthogonal projection onto the boundary, which is well-defined in $A_j$ for $j$ large enough and the very weak formulation is 
\begin{equation*}
	\int_\Omega u^\star \psi = \int_{\partial \Omega} \lim_{x \to \zeta} h (\zeta) \frac {\Ls^{-1}[\psi](x)}{\delta(x)^\gamma}  \qquad \forall \psi \in L^\infty_{\rm c} (\Omega).
\end{equation*}

For the parabolic case when $\alpha = 1$, the same idea of picking
\begin{equation}
    \label{eq:fj to h}
	f_j (t,x) \defeq \frac{|\partial \Omega|}{|A_j|} \frac{\chi_{A_j}}{\delta(x)^\gamma} h(t,P_{\partial \Omega} (x)).
\end{equation}
was shown to work in \cite{Chan+GC+Vazquez2020parabolic}. Now try to extend to $\alpha \in (0,1)$ to construct $\mathcal H_\bullet [0,0,h]$.
\color{black}
Due to \Cref{rem:equality u_0 = 0}, it is clear that $\mathcal H[0,0,h]$ will be the same for both fractional time derivatives.

\begin{theorem}
		\label{thm:h ne 0}
	Assume \eqref{eq:Green estimates}, \eqref{eq:Martin estimates}, \eqref{eq:G boundary regularity}, \eqref{eq:submarkovian}, \eqref{eq:heat-off-diag} and \eqref{eq:exchange}. 
	For any $u_0, f = 0$, and $h\in L^1((0,T)\times \partial\Omega)$. Then, 
	\begin{enumerate}[\rm i)]

    \item Let $f_j$ be given by \eqref{eq:fj to h}, then $\mathcal H[0,f_j,0] \to \mathcal H[0,0,h]$.
 
	\item 
	\eqref{eq:main} admits a unique weak-dual solution and it is given by $u=\Heat[0,0,h]$ as in \eqref{eq:H representation}.

	\item Assume, in addition, \eqref{eq:H**} and $h\in C(\partial\Omega)$. Then \eqref{eq:singular boundary condition 2} holds in the sense that, for each $\zeta \in \partial \Omega$,
	\begin{equation}
		\label{eq:singular boundary condition}
		\lim_{x \to \zeta} \frac{\mathcal H[0,0,h] (t,x) }{ u^\star (x) } = h(t,\zeta).
	\end{equation}
	\end{enumerate} 
\end{theorem}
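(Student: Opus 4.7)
For part (i), the plan is to substitute the explicit form of $f_j$ into the representation
\[
\mathcal{H}[0,f_j,0](t,x) = \int_0^t \frac{|\partial \Omega|}{|A_j|} \int_{A_j} \frac{\mathbb{P}_\alpha(t-\tau,x,y)}{\delta(y)^\gamma}\, h(\tau, P_{\partial \Omega}(y))\, dy\, d\tau.
\]
Parametrising the tube $\{0 < \delta(y) < 2/j\}$ by $(r,\zeta) \in (0, 2/j) \times \partial \Omega$ via the projection $P_{\partial \Omega}$, and using the Martin-type convergence $\mathbb{P}_\alpha(t-\tau,x,y)/\delta(y)^\gamma \to D_\gamma \mathbb{P}_\alpha(t-\tau, x, \zeta)$ as $y \to \zeta$ (which transfers from $\mathbb{S}$ to $\mathbb{P}_\alpha$ through \eqref{eq:exchange} together with \eqref{eq:Martin estimates}), the inner integral converges pointwise in $\tau$ to $\int_{\partial \Omega} h(\tau,\zeta)\, D_\gamma \mathbb{P}_\alpha(t-\tau,x,\zeta)\, d\zeta$. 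Dominated convergence in $\tau$, justified by a uniform Martin-type bound on $\mathbb{P}_\alpha$ inherited from \eqref{eq:Green estimates}--\eqref{eq:Martin estimates}, then yields (i).

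For (ii), existence is immediate from (i) combined with \Cref{thm:h = 0}: each $u_j := \mathcal{H}[0,f_j,0]$ is the unique weak-dual solution with data $(0,f_j,0)$, so it satisfies \eqref{eq:very weak h = 0}. Letting $j \to \infty$, the left-hand side converges to $\iint \mathcal{H}[0,0,h]\, \phi\, dx\, dt$ by (i), while on the right-hand side the source integral
\[
\int_0^T \int_\Omega f_j(t,x)\, \mathcal{H}[0,\phi,0](T-t,x)\, dx\, dt
\]
tends to $\int_0^T \int_{\partial \Omega} h(t,\zeta)\, D_\gamma \mathcal{H}[0,\phi,0](T-t,\zeta)\, d\zeta\, dt$ by the same limiting argument applied to the test function. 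Hence $\mathcal{H}[0,0,h]$ satisfies \Cref{def:weak-dual}. Uniqueness is automatic: the difference of two weak-dual solutions annihilates every $\phi \in \delta^\gamma L^\infty((0,T) \times \Omega)$ and therefore vanishes in $L^1(0,T; L^1(\Omega,\delta^\gamma))$.

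The core is (iii). The plan rests on the identity, obtained from \eqref{eq:exchange} and the substitution $\sigma = \tau s^\alpha$,
\[
\int_0^t \int_{\partial \Omega} D_\gamma \mathbb{P}_\alpha(s,x,\zeta)\, d\zeta\, ds = \int_0^\infty \Phi_\alpha(\tau) \int_0^{\tau t^\alpha} \int_{\partial \Omega} D_\gamma \mathbb{S}(\sigma,x,\zeta)\, d\zeta\, d\sigma\, d\tau,
\]
together with $u^\star(x) = \int_0^\infty \int_{\partial \Omega} D_\gamma \mathbb{S}(\sigma,x,\zeta)\, d\zeta\, d\sigma$ from \eqref{eq:exchange}. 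These recast hypothesis \eqref{eq:H**} (with its parameter taken equal to $t$) as the statement $\Lambda(t,x) := \int_0^t \int_{\partial \Omega} D_\gamma \mathbb{P}_\alpha(s,x,\zeta)\, d\zeta\, ds / u^\star(x) \to 1$ as $x \to \zeta_0$. After the change of variable $s = t - \tau$, I would decompose
\[
\frac{\mathcal{H}[0,0,h](t,x)}{u^\star(x)} - h(t,\zeta_0) = \frac{\int_0^t \int_{\partial \Omega} D_\gamma \mathbb{P}_\alpha(s,x,\zeta)\, [h(t-s,\zeta) - h(t,\zeta_0)]\, d\zeta\, ds}{u^\star(x)} + h(t,\zeta_0)\, \bigl( \Lambda(t,x) - 1 \bigr),
\]
in which the second term vanishes by the above. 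For the first, I would split $(s,\zeta)$ into three regions: $\{s < \epsilon,\ |\zeta-\zeta_0| < \epsilon\}$ (smallness from joint continuity of $h$, exploiting $\Lambda \le 1$), $\{s \ge \epsilon\}$ (handled by \eqref{eq:H**} with parameter $\epsilon^{1/\alpha}$), and $\{s < \epsilon,\ |\zeta - \zeta_0| \ge \epsilon\}$ (spatial tail). The main obstacle is the last region: \eqref{eq:heat-off-diag} transferred through \eqref{eq:exchange} only gives boundedness of $D_\gamma \mathbb{P}_\alpha(s,x,\zeta)/\delta(x)^\gamma$, whereas what is needed is genuine smallness after normalisation by $u^\star(x)$, which in turn requires a careful comparison of the blow-up rate of $u^\star(x)$ against the spatial tail of $D_\gamma \mathbb{S}$ as $x \to \zeta_0$.
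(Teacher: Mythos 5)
For parts (i) and (ii) you take a genuinely different route from the paper. You argue the convergence $\mathcal H[0,f_j,0]\to\mathcal H[0,0,h]$ directly, using the Martin-type limit $\mathbb P_\alpha(\cdot,x,y)/\delta(y)^\gamma\to D_\gamma\mathbb P_\alpha(\cdot,x,\zeta)$ together with dominated convergence in $\tau$. The paper instead proceeds by compactness and uniqueness: the uniform bound of \Cref{lem:compactness} gives a subsequence converging in $L^1(0,T;L^1(\Omega,\delta^\gamma))$, one passes to the limit in the weak-dual identity \eqref{eq:very weak h = 0}, and \Cref{lem:unique} then identifies the limit as $\mathcal H[0,0,h]$ and upgrades to full-sequence convergence, so that (i) and (ii) are proved simultaneously. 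The weak spot in your version is that the off-diagonal bound \eqref{eq:heat-off-diag} you would invoke for domination controls $\mathbb P_\alpha(s,x,y)/\delta(y)^\gamma$ only for $|x-y|\ge\varepsilon$, with a constant $C(\varepsilon)$ that blows up as $x$ approaches $\partial\Omega$; so what you obtain is pointwise (or $L^1_{\rm loc}$) convergence for interior $x$, not the $L^1(0,T;L^1(\Omega,\delta^\gamma))$ convergence your step (ii) then uses to pass to the limit in $\iint u_j\phi$ against $\phi\in\delta^\gamma L^\infty$. The uniform integrability supplied by the modulus $\omega(|A|)$ in \Cref{lem:compactness} is exactly the missing ingredient; without it your (ii) is incomplete, which is why the paper routes through compactness rather than through pointwise limits.

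For part (iii) your strategy is essentially the paper's: the same reformulation of \eqref{eq:H**} into the statement that $\Lambda(t,x)=u^\star(x)^{-1}\int_0^t\int_{\partial\Omega}D_\gamma\mathbb P_\alpha\to 1$, followed by a three-region split into (near $(t,\zeta_0)$), (temporal tail), (spatial tail). The obstacle you flag in the spatial-tail region is the genuine point, but it is already resolved in the paper by enlarging the time integral from $(0,t)$ to $(0,\infty)$ before estimating: by the second identity in \eqref{eq:exchange}, integrating $\Upsilon$ in time over $(0,\infty)$ collapses the Mainardi weight and turns the numerator into the elliptic quantity $\int_{\partial\Omega\setminus B(\zeta_0,\delta)}D_\gamma\mathbb G(x,\zeta)\,\mathrm d\zeta$. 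The spatial tail then reduces to the Martin-kernel localization
\[
\lim_{x\to\zeta_0}\frac{\int_{\partial\Omega}D_\gamma\mathbb G(x,\zeta)\varphi(\zeta)\,\mathrm d\zeta}{\int_{\partial\Omega}D_\gamma\mathbb G(x,\tilde\zeta)\,\mathrm d\tilde\zeta}=\varphi(\zeta_0)
\]
for a smooth cutoff $\varphi$ vanishing at $\zeta_0$, which is a result of \cite{AbatangeloGomezCastroVazquez2019}; no fresh comparison of blow-up rates of $u^\star$ against the spatial tail of $D_\gamma\mathbb S$ is needed, since the time-integrated problem is precisely the elliptic one. With that observation supplied, your three-region argument closes exactly as in the paper.
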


\paragraph{A comment on the hypothesis}
In the previous works, we have checked the hypotheses \eqref{eq:Green estimates},
\eqref{eq:Martin estimates},
\eqref{eq:G boundary regularity},
\eqref{eq:submarkovian} in the examples given in \Cref{sec:heat kernel examples}. It is not difficult to check that the new hypotheses \eqref{eq:heat-off-diag},
\eqref{eq:exchange} also hold in those cases.

\section{The Caputo and Riemann--Liouville time derivatives}
\label{sec:Caputo and Riemann derivatives}

	\subsection{The Riemann--Liouville integral}

The Riemann--Liouville integral is defined for $\alpha > 0$ by
\begin{equation*}
	\RLInt^\alpha w(t) 
 := \frac{1}{\Gamma(\alpha)} \int_0^t w(\xi) (t - \xi)^{\alpha - 1} \diff \xi .
\end{equation*}
This operator is continuous from $L^1(0,T) \to L^1(0,T)$. It has the derivative-like properties 
\begin{equation*}
	\frac{\diff}{\diff t} \RLInt^{\alpha + 1} w(x) = \RLInt^\alpha w(x), \qquad \RLInt^\alpha \RLInt^\beta w = \RLInt^{\alpha + \beta} w
\end{equation*}
and its Laplace transform, which we denote here by $\Laplace$, is given by
\begin{equation*}
	\Laplace[\RLInt^\alpha w](s) = s^{-\alpha} \Laplace[w](s)
\end{equation*}
whenever $\Re(s) > \sigma$ and $w(t) e^{-\sigma t} \in L^1(0,\infty)$.

\bigskip 

Given these definitions, we point out that the Caputo derivative can be equivalently defined for $\alpha \in (0,1)$ by
\begin{equation*}
	\caputo t w (t) := \RLInt^{1-\alpha} \frac{\diff}{\diff t} w.
\end{equation*}
This formula can be extended to $\alpha > 1$. On the other hand, the Riemann--Liouville derivative is equivalently defined by
\begin{equation*}
	\riemann t w
	=\begin{dcases}
		\frac{\diff^{\ceil \alpha}}{\diff t^{\ceil \alpha}} \RLInt^{\ceil \alpha - \alpha }w, & \text{if }\alpha > 0 , \\
		w, & \text{if } \alpha = 0, \\
		\RLInt^{-\alpha} w, & \text{if } \alpha <0,
	\end{dcases}
\end{equation*}
where $\lceil \alpha \rceil$ is the ceiling function of $\alpha$.

\bigskip 

Due to this representation, the Laplace transform of these differentiation operators can be easily computed. Indeed, for the Caputo derivative we have that
\begin{equation}
	\label{eq:Caputo Laplace}
	\Laplace \left[ \caputo t w \right] (s) = s^\alpha \Laplace [w] - s^{\alpha - 1} w(0), \qquad \alpha \in (0,1).
\end{equation}
Similarly, the Laplace transform of the Riemann--Liouville derivative is given by
\begin{equation}
	\label{eq:Riemann Laplace}
	\Laplace\left[ \riemann t w \right] (s) = s^\alpha \Laplace[w] (s) - \lim_{h\to 0^+} [ \riemannOrder t {\alpha-1} w(h) ], \qquad \alpha \in (0,1).
\end{equation}
	
	\subsection{The Mittag-Leffler and Mainardi functions} 
For the solution of these equations we will use the Mittag-Leffler functions defined by
\begin{equation}\label{eq:ML-series}
	E_{\alpha,\beta} (z)
	=\sum_{k=0}^{\infty}
	\dfrac{z^k}{\Gamma(\alpha k+\beta)} .
\end{equation}
It is also common to denote $E_\alpha = E_{\alpha,1}$. 
These are entire functions.
Notice, furthermore, that $E_{1,1} (z) = e^z$. The advantage of Mittag-Leffler function in the study of \eqref{eq:spectral decomposition ODE} can be seen in the following computation concerning the Laplace transform of its suitable moment:
\begin{equation}
	\label{eq:Mittag-Leffler Laplace}
	\Laplace[t^{\beta-1}E_{\alpha,\beta}(-\lambda t^\alpha)](s)
	=
	\int_0^\infty 
	t^{\beta - 1} E_{\alpha, \beta} (-\lambda t^\alpha) e^{-st} \diff t = \frac{s^{\alpha-\beta}}{s^\alpha +\lambda},
\end{equation}
so long as $\Re(s), \Re(\alpha), \Re(\beta),\lambda  > 0$. 
The cases $\beta=1$ and $\beta=\alpha$ will be particularly useful.

There are many known properties of $E_\alpha(-t^\alpha)$ (see, e.g. \cite{Mainardi2014} and the references therein). In particular
\begin{align*}
	E_\alpha(-t^\alpha) 
    &\sim f_\alpha(t)
	\coloneqq \frac{1}{
        1 + \Gamma(1+\alpha)^{-1} \, t^\alpha
    },  \qquad \text{as } t \to 0, \\
	E_\alpha(-t^\alpha) &\sim g_\alpha(t) 
	\coloneqq  \frac{1}{1+\Gamma(1-\alpha)\,t^\alpha},
	\qquad \text{as } t \to \infty. 
\end{align*}
It is left as a conjecture that $f_\alpha(t) \le E_\alpha(-t^\alpha) \le g_\alpha(t)$. 
Also, it is known that $E_\alpha(-t^\alpha)$ is completely monotone in $t$, i.e. %
$(-1)^n \frac{\diff^n }{dt^n} E_\alpha(-t^\alpha) \ge 0$. From \eqref{eq:ML-series}, it is easily verified that $E_{\alpha,\alpha}(-t^\alpha)=t^{1-\alpha}\,(-1)\frac{d}{dt}E_{\alpha}(-t^{\alpha})$ is also non-negative and non-increasing. By manipulating the series, it is easy to see the recurrence property
\[
zE_{\alpha,\beta}(z)=E_{\alpha,\beta-\alpha}(z)-\frac{1}{\beta-\alpha}.
\]
This implies, in particular, the global bounds
\begin{equation}\label{eq:ML-bound}
E_{\alpha}(-\lambda t^\alpha)\leq \frac{C}{1+\Gamma(1-\alpha)\lambda t^\alpha},
    \quad
E_{\alpha,\alpha}(-\lambda t^\alpha)\leq \frac{C}{1+\Gamma(-\alpha)(\lambda t^\alpha)^2},
    \quad
\text{for } t\geq 0.
\end{equation}

\bigskip 

We also recall the definition of the Mainardi function (or Wright-type function)
\begin{equation}
\label{eq:Mainardi}
\Phi_\alpha(t)
\coloneqq\sum_{k=0}^{\infty}
\dfrac{
	(-t)^k
}{
	k!\Gamma(1-\alpha(k+1))
},
\qquad \text{for }t>0.
\end{equation}
It is known that $\Phi_\alpha \ge 0$ (see \cite[Section 4]{mainardi2010MWrightFunctionTimeFractional}).
It is in fact an entire function for complex arguments $t\in \mathbb{C}$, and has explicit moments
\begin{equation}\label{eq:Wright-moment}
	\int_0^\infty
	t^p
	\Phi_\alpha(t)
	\,dt
	=\dfrac{
		\Gamma(1+p)
	}{
		\Gamma(1+\alpha p)
	},
	\qquad
	p>-1.
\end{equation}
In particular, $\Phi_\alpha$ is a probability density function. We observe that $\Phi_\alpha$ arises as the inverse Laplace transform of the Mittag-Leffler function. In fact, we have the following relations which are easily verified via a series expansion:
\begin{align} 
	\label{eq:Wright and Mittag-Leffler 1}
	\int_{0}^{\infty}
	\Phi_\alpha(t)
	e^{-t z}
	\diff t
	&= E_\alpha(-z), \\
	\label{eq:Wright and Mittag-Leffler 2}
	\int_{0}^{\infty}
	t \Phi_\alpha(t)
	e^{-t z} \diff t
	&= - \frac{\diff E_\alpha}{\diff z}(-z)
	=\frac{1}{\alpha}E_{\alpha,\alpha}(-z).
\end{align}

\subsection{Ordinary integro-differential equations with Caputo derivative}

We focus our attention on 
\begin{gather}
	\tag{ODE$_C$}
	\label{eq:ODE Caputo}
	\begin{dcases} 
		\caputo t u (t) + \lambda u (t) = f (t), & t > 0, \\
		u (0) = u_{0}.
	\end{dcases} 
\end{gather}
Applying \eqref{eq:Caputo Laplace}, we can find the solution of \eqref{eq:ODE Caputo} in the Laplace variable:
\begin{align*}
	\Laplace [u] (s) 
	&= \frac{s^{\alpha-1}}{s^\alpha + \lambda} u_0 + \frac{1}{s^\alpha + \lambda} \Laplace[f](s)\\
	&=\Laplace[u_0 E_\alpha(-\lambda t^\alpha)](s)
	+\Laplace[t^{\alpha-1}E_{\alpha,\alpha}(-\lambda t^\alpha)](s)
	\Laplace[f](s),
\end{align*}
where in the last equality we have \eqref{eq:Mittag-Leffler Laplace} with $\beta=1$ and $\beta=\alpha$. Taking inverse Laplace transform we easily obtain the general solution
\begin{equation}
	\label{eq:ODE Caputo solution} 
	u (t) = u_0 E_\alpha (-\lambda t^\alpha) + \int_0^t P_{\alpha}(t-\tau;\lambda) f(\tau) \diff \tau.
\end{equation}
where
\begin{equation}
	\label{eq:P alpha}
	P_{\alpha}(t;\lambda) = t^{\alpha-1} E_{\alpha,\alpha}(-\lambda t^\alpha)
	=-\frac{1}{\lambda}\frac{d}{dt} E_\alpha(-\lambda t^\alpha)
        = \frac{\alpha t^{\alpha-1}}{\lambda} \int_{0}^{\infty}
	\sigma \Phi_\alpha(\sigma)
	e^{-\sigma \lambda t^\alpha} \diff \sigma.
\end{equation}
This has been discussed in \cite{Kilbas2006}.
As the product of non-negative and non-increasing functions, $P_\alpha (\cdot, \lambda)$ is non-negative and non-increasing for all $\lambda \ge 0$.

\subsection[Ordinary integro-differential equations with Riemann--Liouville derivative]{Ordinary integro-differential equations with \\ Rie\-mann--Liou\-ville derivative}

Consider now
\begin{gather} 
	\begin{dcases}
		\tag{ODE$_R$}
		\label{eq:ODE Riemann}
		\riemann t v(t)  + \lambda v(t)  = g(t) , & t > 0 , \\		
		\lim_{h \to 0^+} \riemannOrder t {\alpha-1} v (h) = v_0.
	\end{dcases}
\end{gather} 
Applying \eqref{eq:Riemann Laplace}, we can solve the ODE in the Laplace space as
\begin{equation*}
	\Laplace [v] (s) = \frac{1}{ s^\alpha + \lambda}v_0 + \frac{\Laplace[g]}{s^\alpha + \lambda}.
\end{equation*}
Therefore, the general solution for \eqref{eq:ODE Riemann} is given by
\begin{equation}
	\label{eq:ODE Riemann solution}
	v(t) =   v_0 P_{\alpha}(t;\lambda)+ \int_0^t P_{\alpha} (t-\tau;\lambda) g(\tau) \diff \tau .
\end{equation} 
where $P_{\alpha}(\cdot;\lambda)$, given by \eqref{eq:P alpha}, is the same as in the solution for \eqref{eq:ODE Caputo}. 

\begin{remark} 
	Notice that if $u$ is the solution of \eqref{eq:ODE Caputo} and $v$ is the solution of \eqref{eq:ODE Riemann} with $u_0 = v_0 = 0$ and $f = g$, then $u \equiv v$.
\end{remark} 

\begin{remark}
	\label{rem:Palpha singular at 0}
	Notice that providing $P_\alpha(t;\lambda) \sim \frac{t^{\alpha-1}}{\Gamma(\alpha)}$ as $t \to 0$. Therefore, the initial condition $v(0^+)$ must be understood in a singular way.
\end{remark}

\subsection{Integration by parts with the Caputo derivative}

To compute the adjoint of the Caputo derivative in $L^2(0,T)$ we have
$$
\begin{aligned} 
	\int_0^T  \varphi(t)  \left(\caputo t [u] \right) (t)   \diff t  
	&=	 \frac{1}{\Gamma(1-\alpha)}
	\int_0^T \varphi (t) \int_0^t (t - \sigma)^{-\alpha} u' (\sigma) \diff \sigma \diff t \\
	&=
	\frac{1}{\Gamma(1-\alpha)}  \int_0^T u' (\sigma)  \int_\sigma^T (t - \sigma)^{-\alpha} \varphi(t) \diff t \diff \sigma \\
	&=
	\int_0^T u (\sigma) { \frac{-1}{\Gamma(1-\alpha)}  \frac{\partial}{\partial \sigma}  \left(  \int_\sigma^T (t - \sigma)^{-\alpha} \varphi(t)  \diff  t \right) } \diff  \sigma \\
	&\quad 	+\frac{1}{\Gamma(1-\alpha)} \left( u(T) \lim_{\sigma \to T^-} \int_{\sigma}^T (t-\sigma)^{-\alpha} \varphi(t) \diff t   - u(0)    \int_0^T t^{-\alpha} \varphi(t)  \diff  t     \right).  
\end{aligned}
$$
Since the equation with Caputo derivative involves $\left(\caputo t [u] \right) (t)$ and $u(0)$, we are therefore interested in the adjoint problem
\begin{equation*}
	\begin{dcases}
		\left(\caputo t \right)^* [\varphi]  (\sigma) \defeq \frac{-1}{\Gamma(1-\alpha)}  \frac{\partial}{\partial \sigma}  \left(  \int_\sigma^T (t - \sigma)^{-\alpha} \varphi(t)  \diff  t \right) \\ 
		\varphi_T \defeq \tfrac{1}{\Gamma(1-\alpha)} \lim_{\sigma \to T^-} \int_{\sigma}^T (t-\sigma)^{-\alpha} \varphi(t)  \diff  t.
	\end{dcases}
\end{equation*}
This is an integro-differential equation involving the right Riemann--Liouville derivative (with final condition given by a fractional Riemann--Liouville integral of order $1-\alpha$).  %
Nevertheless, as for the case $\alpha =1$, we do not expect $\left(\caputo t \right)^* [\varphi] + \Ls \varphi = 0$ to have a solution, so we ``reverse time''.

\subsection{Caputo and Riemann--Liouville derivatives are adjoint up to reversing time}
\label{sec:Caputo-Riemann duality}

As usual, we want to reverse time $\varphi (t) = \phi (T-t)$ so that, taking $\tau = T - \sigma$, we have
\begin{align*}
	(\caputo{t} )^* [ \varphi ] (\sigma) 
	&= \frac{-1}{\Gamma(1-\alpha)}  \frac{\partial}{\partial \sigma}  \left(  \int_\sigma^T (t - \sigma)^{-\alpha} \varphi(t)  \diff t \right) 
	\\
	&
	= \frac{-1}{\Gamma(1-\alpha)}  \frac{\partial}{\partial \sigma}  \left(  \int_\sigma^T (t - \sigma)^{-\alpha} \phi(T-t) \diff t \right) \\
	&= \frac{-1}{\Gamma(1-\alpha)}  \frac{\partial}{\partial \sigma}  \left(  \int_0^{T-\sigma} (T-\xi  - \sigma)^{-\alpha} \phi(\xi )  \diff \xi \right) 
	\\
	&
	= \frac{1}{\Gamma(1-\alpha)}  \frac{\partial}{\partial \tau}  \left(  \int_0^{\tau} (\tau -\xi  )^{-\alpha} \phi(\xi )  \diff \xi \right) \\
	&= \left(\riemann \tau\right) [\phi] (\tau)
	= \left(\riemann \tau\right) [\phi] (T - \sigma).
\end{align*}
This is precisely the (left) Riemann--Liouville fractional derivative. 
Notice the ``initial'' conditions
\begin{align*}
	\frac{1}{\Gamma (1-\alpha )} \lim_{\sigma \to T^-} \int_{\sigma}^T (t-\sigma)^{-\alpha} \varphi (t)  \diff t 
	&=\frac{1}{\Gamma (1-\alpha )}  \lim_{h \to 0^+} \int_{0}^h (h - \xi )^{-\alpha} \phi (\xi)  \diff \xi \\
	&=\lim_{h \to 0^+} \riemannOrder{t}{\alpha-1} [\phi] (h).
\end{align*}

But then we can rewrite the integration by parts formula as
\begin{equation} 
	\begin{aligned} 
		\int_0^T  &\phi(T-t)  \left(\caputo t [u] \right) (t)   \diff t  
		+ u(0) \int_0^T \frac{ t^{-\alpha} \phi(T-t) }{\Gamma(1-\alpha)}  \diff  t \\   
		&=  \int_0^T u (t) 
		\left(\riemann \tau[\phi]\right)  (T - t) %
		\diff t 
		+ u(T) \lim_{h \to 0^+} \riemannOrder{t}{\alpha-1} [\phi] (h). 
	\end{aligned}
\end{equation} 

Thus, if $u$ solves \eqref{eq:ODE Caputo} and $v$ solves \eqref{eq:ODE Riemann}, then we have that
\begin{equation}
	\label{eq:ODE adjoint}
	\begin{aligned} 
		\int_0^T v(T-t) &\Big( - \lambda u(t) + f(t) \Big) \diff t + u_0 \int_0^T \frac{ t^{-\alpha} v(T-t)}{\Gamma(1-\alpha)} \diff t \\
		&= \int_0^T u(t) \Big( - \lambda v (T-t) + g(T-t) \Big) \diff t + u(T) v_0.
	\end{aligned}
\end{equation}
Notice that the only remainder that we have due to ``non-locality'' is the second term on the left-hand side. As $\alpha \to 1$, we recover %
the classical integration by parts.

\section{Time-fractional problem when \texorpdfstring{$h=0$}{h=0}. An \texorpdfstring{$L^2$}{L2} theory}
\label{sec:h=0 L2}

We now go back to the spectral decomposition \eqref{eq:spectral decomposition} and take advantage of the explicit solutions of \eqref{eq:spectral decomposition ODE} (as \eqref{eq:ODE Caputo solution} and \eqref{eq:ODE Riemann solution}).

Given suitable functions $F : \mathbb R \to \mathbb R$ and a spectral decomposition of $\Ls$, it is natural to define  $F(\Ls)$ by the linear operator such that $F(\Ls) [\varphi_j] = F(\lambda_j) \varphi_j$. Therefore
\begin{equation}
	\label{eq:FL}
	F(\Ls) [u] (x) = \sum_{j=1}^\infty F(\lambda_j) \langle u , \varphi_j \rangle \varphi_j (x). 
\end{equation}
Recall that the solution of the local-in-time heat equation $\partial_t u = - \Ls u_0$ is given by 
\[
\Semi(t)v
=\sum_{m=1}^{\infty}
e^{-\lambda_m t}
\angles{v,\varphi_m}
\varphi_m = e^{-t \Ls} v.
\]
Hence, we have the kernel representation 
\begin{equation*}
	\Semi(t)v (x) = \int_\Omega \p (t,x,y) u_0 (y) \diff y, \qquad \p(t,x,y) = \sum_{j=0}^\infty e^{-\lambda_j t} \varphi_j (x) \varphi_j (y).
\end{equation*}
In various examples we know upper and lower bounds for $\p$.

\normalcolor

Recall that, in this notation, the solution of the elliptic problem $\Ls^{-1} [f]$ is given precisely by 
\begin{align*}
	\Ls^{-1} [f]  (x)
	&= 
	\sum_{j=1}^\infty 
		\frac { 1 } {\lambda_j }
		\angles{f, \varphi_j}
		\varphi_j   (x)
	= 
	\int_0^\infty \sum_{j=1} ^\infty e^{-\lambda_j t} 	\angles{f, \varphi_j} 	\varphi_j  (x)  \diff t \\
	&=
	\int_0^\infty e^{-t \Ls} [f] (x) \diff t 
\end{align*}
Or, equivalently written as in terms of the kernels
\begin{equation*}
	\G(x,y) = \int_0^\infty \p(t,x,y) \diff t.
\end{equation*}

\begin{remark}
	Notice that the solution of $\Ls u + \mu u = f$ can be obtained similarly by
	\begin{equation*}
		(\Ls + \mu )^{-1} = \int_0^\infty e^{-\mu t} e^{-t \Ls} \diff t.
	\end{equation*}
	This is well defined for $\mu > -\lambda_1$. Since the heat semigroup is non-negative, for $\mu \in (-\lambda_1, \infty)$, for $0 \le f \in L^2 (\Omega)$ we construct exactly one non-negative solutions in $L^2 (\Omega)$. This is the ethos behind \cite{Chan+GC+Vazquez2020parabolic}.
\end{remark}

\color{black}

	\subsection{Caputo}

We would like to obtain a representation formula for $\Heat_{C}[u_0,f,0]$. We give two equivalent expressions: one in terms of Mittag-Leffler function and through \eqref{eq:FL}, and the other in terms of Mainardi function and the heat kernel $\p$.

Due to \eqref{eq:spectral decomposition ODE} and \eqref{eq:ODE Caputo solution}, it is clear, from the solution of the coefficients of the spectral decomposition, that we can write
\begin{align*}
	\mathcal H_C[u_0,0,0] (t,x) &= \sum_{j=1}^\infty u_j(t) \varphi_j (x) = \sum_{j=1}^\infty E_{\alpha}(-\lambda_j t^\alpha ) \langle u_0, \varphi_j \rangle \, \varphi_j (x) \\
	&= E_\alpha (-t^\alpha \Ls) u_0.
\end{align*}
We denote this operator by $\Semi_\alpha (t) \defeq E_\alpha (-t^\alpha \Ls)$. 
Using a similar argument, it can be deduced that the solution of ($P_C$) is given by
\begin{align}
\label{eq:HC}
	\mathcal H_C [u_0,f,0] (t)
	&=\Semi_\alpha(t)u_0
	+\int_0^t \cP_\alpha(t-\tau)f(\tau) \diff \tau
\end{align}
where 
\begin{equation}
	\label{eq:P_alpha}
	\cP_\alpha(t) 
	\defeq P_\alpha(t;\Ls)
	=t^{\alpha-1} E_{\alpha,\alpha}(-t^\alpha \Ls).
\end{equation}
This formula is already presented in \cite{Gal2020}. We emphasize that $\Semi_\alpha(t)$ and $\cP_\alpha(t)$ do not satisfy the semigroup property in general.

\paragraph{$L^2$ theory}

As usual, we have that
\begin{align*}
	\Big\| \mathcal H_C [u_0,0,0] (t) \Big\|_{L^2 (\Omega)}^2 
 &= \sum_{j=1}^\infty E_\alpha(-\lambda_j t^\alpha)^2 \langle u_0 , \varphi_j \rangle ^2 
 \le E_\alpha(-\lambda_1 t^\alpha)^2 \sum_{j=1}^\infty \angles{u_0,\varphi_j}^2 \\
	&= E_\alpha(-\lambda_1 t^\alpha)^2 \|u_0\|_{L^2 (\Omega)}^2.
\end{align*}
Therefore $S_\alpha : L^2 (\Omega) \to C(0,\infty; L^2 (\Omega))$ and we have the estimate
\begin{equation}
	\Big\| \mathcal H_C [u_0,0,0] (t) \Big\|_{L^2 (\Omega)} \le E_\alpha(-\lambda_1 t^\alpha) \|u_0\|_{L^2 (\Omega)}
\end{equation}
\begin{remark}
    Notice that, due to the slow decay of $E_\alpha$ we have that
    $
        {E_\alpha(-\lambda_n t^\alpha)} / {E_\alpha(-\lambda_1 t^\alpha)}
    $
    does not converge to zero as $t \to \infty$ for $n > 1$. Hence, unlike in the case $\alpha = 1$ we cannot simplify $\mathcal H_C [u_0,0,0] (t)$ to $E_\alpha(-\lambda_1 t^\alpha) \langle u_0, \varphi_1 \rangle \varphi_1$.
\end{remark}

Similarly, we have an $L^2$ estimate for finite time $t\in[0,T]$,
\begin{align*}
	\Big\| \mathcal H_C [0,f,0] (t) \Big\|_{L^2 (\Omega)}^2 
	&\le \left( \int_0^t \tau^{\alpha-1} E_{\alpha,\alpha} (-\lambda_1 \tau^\alpha) \diff \tau \right) \sup_{\tau \in [0,T]} \| f(\tau) \|_{L^2(\Omega)}^2\\
	&=\frac{1-E(-\lambda_1 t^\alpha)}{\lambda_1}
	\sup_{\tau \in [0,T]} \| f(\tau) \|_{L^2(\Omega)}^2.
\end{align*}

The solution for $f(t,x) = f(x)$ is cleanly expressed as
\begin{align*}
	\mathcal H_C [0,f,0] (t) 
	&= \sum_{j=1}^\infty \int_0^\infty \tau^{\alpha-1} E_{\alpha,\alpha} (-\lambda_j \tau^\alpha) \diff \tau 
	\,\langle f, \varphi_j \rangle \varphi_j 
	= \sum_{j=1}^\infty \frac{1-E_\alpha(-\lambda_j t^\alpha)}{\lambda_j} \langle f, \varphi_j \rangle \varphi_j.
\end{align*}
Hence its asymptotic behaviour is simply given by
\begin{align*}
	\lim_{t\to\infty}
	\mathcal H_C [0,f,0] (t) 
	&=
	\sum_{j=1}^\infty \frac{1}{\lambda_j} \langle f, \varphi_j \rangle \varphi_j
	=\Ls^{-1} f,
\end{align*}
i.e. the solution of the $\Ls$-Poisson equation. 
\begin{remark}
	Notice that the notation $\Ls^{-1}$ for the %
    Green operator (and more generally $(\Ls+\mu)^{-1}$ for the resolvent) is consistent with the notation \eqref{eq:FL}.
\end{remark}

\paragraph{Properties of the kernel representation}

In \cite[Section 2.1]{Gal2020} it is shown that, as $t \searrow 0$ we have
\begin{equation}
	\label{eq:continuity at 0} 
\begin{aligned} 
	\left\| \Semi_\alpha(t) [ v] - v \right\|_{L^2(\Omega)} &\longrightarrow 0 \\
	\left\| \tfrac{t^{\alpha - 1}}{\Gamma(\alpha)} \cP_\alpha(t) [v] - v \right\|_{L^2(\Omega)} &\longrightarrow 0.
\end{aligned}
\end{equation}   
We refer to \cite[Section 2.2]{Gal2020} for $L^p$-$L^q$ estimates which are recovered in terms of the contractivity of $\Semi(t)=e^{-t \Ls}$. We stress that $\Semi_\alpha(t):L^p(\Omega) \to L^q(\Omega)$ only when $\frac{n}{2s}(\frac1p-\frac1q)<1$ and $\cP_\alpha(t):L^p(\Omega) \to L^q(\Omega)$ only when $\frac{n}{2s}(\frac1p-\frac1q)<2$.

\normalcolor

	\subsection{Riemann--Liouville}
Similarly, since we have already defined 
\eqref{eq:P_alpha},
going back to 
\eqref{eq:ODE Riemann solution}
we observe that
\begin{align}
\label{eq:HR}
	\mathcal H_R [u_0,f,0] (t)
	&=\cP_\alpha(t)u_0
	+\int_0^t \cP_\alpha(t-\tau)f(\tau) \diff \tau.
\end{align}

Since $P_\alpha (t; \lambda) \sim \frac{t^{\alpha - 1}}{\Gamma(\alpha)}$ at $0$ (see \Cref{rem:Palpha singular at 0}), we cannot expect to construct a theory with $\cP_\alpha: L^2 (\Omega) \to L^\infty (0,T; L^2 (\Omega))$. Nevertheless, we do have $\cP_\alpha:L^2(\Omega) \to t^{\alpha-1} L^\infty(0,T;L^2(\Omega))$ with the corrected estimate
\begin{equation}
	\Big\| \mathcal H_R [u_0,0,0] (t) \Big\|_{L^2 (\Omega)} \le t^{\alpha - 1} E_{\alpha,\alpha} (-\lambda_1 t^\alpha) \| u_0 \|_{L^2}. 
\end{equation}

\begin{remark}
	\label{rem:equality u_0 = 0}
	We point out that
	$
	\mathcal H_R[0,f,0] (t) = \mathcal H_C[0,f,0] (t)
	$. Therefore, we define simply
	\begin{equation*}
		\mathcal H [0,f,0] (t) \defeq  \mathcal H_R [0,f,0] (t).
	\end{equation*}
\end{remark}
	
	\subsection{Well-posedness. Proof of \texorpdfstring{\Cref{thm:h = 0 L2}}{Theorem \ref{thm:h = 0 L2}}}
Uniqueness of spectral solutions of either initial value problem that lie in the spaces in the statement follows directly from the theory of fractional ODEs developed. By construction, our candidate solutions \eqref{eq:H representation} satisfy the spectral equation. 

The only missing detail, then, is the regularity of our candidate solutions. We have already proven that $\Semi_\alpha(t)[v], \cP_\alpha(t)[v] \in L^2 (\Omega)$. In fact, due to the inverse linear (respectively quadratic) decay of the Mittag-Leffler functions stated in \eqref{eq:ML-bound}, they are also in $\mathrm H(\Omega)$.

The continuity at $t = 0$ follows from \eqref{eq:continuity at 0}. Due to the bounds presented before, in fact $\Semi_\alpha(\cdot)[u_0], \cP_\alpha (t) u_0 \in C([\varepsilon,T]; \mathrm H(\Omega))$. The integral part is even easier.

The time differentiability follows from the explicit computation of $\frac{du}{dt}$ as in \cite[Proposition 2.1.9]{Gal2020}. We point out that $\Semi_\alpha' = \cP_\alpha \Ls$.
This can be done similarly for the Riemann--Liouville derivative.
\qed 

\begin{remark}
In \cite{Gal2020} the authors deal with the notion of \emph{strong solution}. This is also possible in our setting, but our interest in the very weak solutions described below.
\end{remark} 

\subsection{Very weak formulation when \texorpdfstring{$h = 0$}{h = 0}}
\label{sec:very weak formulation}
Due to \eqref{eq:ODE adjoint}, for every $T > 0$, $u_0, v_0 \in L^2 (\Omega)$ and $f,g \in L^2((0,T) \times \Omega)$ that
\begin{equation}
	\label{eq:PDE adjoint}
	\begin{aligned} 
	\int_0^T \int_\Omega &\mathcal H_R [v_0,g,0](T-t,x) f(t,x) \diff x \diff t + \int_\Omega u_0 (x)  \int_0^T \frac{ t^{-\alpha} }{\Gamma(1-\alpha)} \mathcal H_R[v_0,g,0] (T-t,x) \diff t \diff x \\
	& = \int_0^T \int_\Omega \mathcal H_C [u_0, f, 0](t , x) g(T-t,x) \diff x \diff t + \int_\Omega \Heat_C[u_0,f,0](T,x) v_0 (x) \diff x.
	\end{aligned} 
\end{equation}

This allows for a very natural definition of very weak solution:
This notion of solution yields uniqueness and positivity in a very standard way. It is also compatible with the $L^2$ theory constructed before. 

Integrability properties can be directly recovered from the estimates of the kernels of $\Semi_\alpha(t)$, $\cP_\alpha(t)$ that are directly related to those of $\Semi(t)=e^{-t\Ls}$.

\begin{remark}
    Notice that we could equivalently write that for every
	$v_0 \in L_c^\infty (\Omega)$ and for a.e. $t > 0$ we have
	\begin{equation}
		\label{eq:Caputo very weak}
		\begin{aligned} 
			\int_\Omega u(t,x) v_0 (x) \diff x &= 	\int_0^t \int_\Omega f(\sigma,x) \cP_\alpha[v_0](t-\sigma,x)  \diff \sigma \diff x \\
			&\qquad + \int_\Omega u_0 (x)  \int_0^t \frac{ \sigma^{-\alpha}}{\Gamma(1-\alpha)} \cP_\alpha[v_0](t-\sigma,x) \diff \sigma \diff x.
		\end{aligned} 
	\end{equation}
	This formulation is nicer for the $L^\infty$ estimates in time.
\end{remark} 

\section{Time-fractional problem when \texorpdfstring{$h = 0$}{h=0} beyond \texorpdfstring{$L^2$}{L2}}
\label{sec:h=0 L1 and inf}

	\subsection{Weighted \texorpdfstring{$L^1$}{L1} and \texorpdfstring{$L^\infty$}{Linf} theory}

When we leave the $L^2$ framework, we need to look beyond simple properties of $E_\alpha$ and $E_{\alpha,\alpha}$. 
It is here where the Mainardi function comes into play.

For example, \cite[Proposition 2.1.3]{Gal2020} uses the representation \eqref{eq:Salpha from S}, \eqref{eq:Palpha from S}, and properties of the Mainardi function to show that 
$$
	\| \Semi(t) u_0 \|_Y \le M \|u_0 \|_Y
$$ 
for all $u_0 \in Y$ implies
$$
	\| \Semi_\alpha(t) u_0 \|_Y \le C \| u_0 \|_Y \qquad \text{ and } \qquad t^{1-\alpha} \| \cP_\alpha(t) u_0 \|_Y \le C \|  u_0 \|_Y
$$
Therefore, for suitably integrable $f$, similar properties hold for $\mathcal H [0,f,0]$. 
In particular, the mass contractivity $\| \Semi(t) u_0 \|_{L^1(\Omega)} \le \|u_0\|_{L^1(\Omega)}$ allows us to construct an $L^1 (\Omega)$ theory.

\bigskip 

Furthermore, it is common that the first eigenfunction $\varphi_1(x)$ satisfies the boundary condition with a rate $\delta(x)^\gamma$ for some $\gamma$ positive.  This is the case, for example with the Restricted Fractional Laplacian ($\gamma = s$), Censored Fractional Laplacian ($\gamma = 2s - 1$ which is only defined for $s > \frac 1 2$), and the Spectral Fractional Laplacian ($\gamma = 1$). 
This is the expected boundary behaviour of all solution with good data, as we proved in \cite{AbatangeloGomezCastroVazquez2019} for the elliptic case and \cite{Chan+GC+Vazquez2020parabolic} for the parabolic case. In those papers, conditions are set on the Green kernel. 
However, it is more convenient for us now to set condition on the heat kernel. We set ourselves in a framework that covers the three main settings, where sharp estimates for the kernels are provided in \Cref{sec:heat kernel examples}.

\bigskip 

The canonical framework is that for good data we expect solutions in $\delta^\gamma L^\infty (\Omega)$ (a weighted space cointaining $\varphi_j$). The worst admissible data is in $L^1 (\Omega , \delta^\gamma)$, a fact guaranteed by the lower estimate
\begin{equation*}
	\Ls^{-1} [f] (x) \ge c_1 \delta(x)^\gamma \int_\Omega f(y) \delta(y)^\gamma \diff y, \qquad \forall f \ge 0.
\end{equation*}
where $c_1 > 0$.

\bigskip 

In general, under \eqref{eq:Green estimates}, we have that 
\begin{equation*}
	\Ls^{-1} [f] \asymp \delta^\gamma, \qquad \forall 0 \le f \in L^\infty_c (\Omega).
\end{equation*}

\begin{remark} 
In the local-in-time setting in \cite{Chan+GC+Vazquez2020parabolic} we showed the nice regularisation 
$$\Semi(t) : L^1(\Omega, \delta^\gamma) \to \delta^\gamma L^\infty (\Omega)$$ 
using the semigroup property.
Since we were not interested in the operator norm, conditions on the Green kernel sufficed. 
Due to the memory coming from the non-locality in time, we cannot expect such regularisation. Going back to \eqref{eq:Salpha from S} we have that
\begin{equation*}
	\frac{ \Semi_\alpha(t)[u_0] (x) }{ \delta(x) ^{\beta_1} } = \int_\Omega \left( \int_{0}^{\infty} \Phi_\alpha(\tau) \frac{\p ({\tau t^{\alpha}} ,x,y ) }{\delta(x)^{\beta_1} \delta(y)^{\beta_2} } \diff \tau \right)  u_0(y) \delta(y)^{\beta_2}  \diff y
\end{equation*}
Therefore, the regularisation relies on the integrals
\begin{equation*}
	\int_{0}^{\infty} \Phi_\alpha(\tau) \frac{  \p ({\tau t^{\alpha}} ,x,y ) }{\delta(x) ^{\beta_1} \delta(y)^{\beta_2} } \diff \tau.
\end{equation*}
Unfortunately, obtaining sharp estimates for such integral appears to be a non-trivial task.
\end{remark}

We start developing the theory of very weak solutions with a compactness estimate.

\begin{lemma}[Uniform space-time integrability in $L^1(0,T;L^1(\Omega,\delta^\gamma))$]
\label{lem:compactness}
Let $0\leq t_0<t_1\leq T$ and $A\subset \Omega$.  
\normalcolor
Then
\begin{align*}
\int_{t_0}^{t_1}&\int_{A}
    |\Heat_C[u_0,f,0]|
    \delta(x)^\gamma
\diff x \diff t \\
&\leq
    \omega_T(t_1-t_0)\omega(|A|)
    \left(
        \int_{\Omega}
        |u_0(x)|\delta^\gamma
        \diff x
        +\int_{0}^{T}\int_{\Omega}
            |f(t,x)|\delta(x)^\gamma
        \diff x \diff t
    \right).
\end{align*}
\end{lemma}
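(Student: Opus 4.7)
I would use the direct representation $\Heat_C[u_0,f,0](t) = \Semi_\alpha(t)[u_0] + \int_0^t \cP_\alpha(t-\tau)[f(\tau)]\diff\tau$ together with Fubini and the symmetry of the kernels $\bS_\alpha,\bP_\alpha$. This reduces the lemma to uniform-in-$y$ kernel estimates of the form
\[
\frac{1}{\delta(y)^\gamma}\int_{I}\!\int_A K(\eta,x,y)\,\delta(x)^\gamma\,\diff x\,\diff\eta \le \omega_T(t_1-t_0)\,\omega(|A|),
\]
with $K \in \{\bS_\alpha,\bP_\alpha\}$ and $I$ an interval of length at most $t_1-t_0$. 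For the $\bP_\alpha$-integral the monotonicity of $\bP_\alpha(\eta,x,y)$ in $\eta$ (product of two non-negative, non-increasing factors $\eta^{\alpha-1}$ and $E_{\alpha,\alpha}(-\lambda\eta^\alpha)$) allows shifting $I$ to $[0,t_1-t_0]$.

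\textbf{Two complementary bounds.} I would estimate each kernel integral in two ways and combine them via the elementary inequality $g\le\sqrt{g_1 g_2}$. The \emph{area modulus} comes from pointwise comparison with the Green kernel: the identity $\int_0^\infty \bP_\alpha(\eta,x,y)\diff\eta = \G(x,y)$ follows from $\int_0^\infty P_\alpha(\tau;\lambda)\diff\tau = 1/\lambda$ via \eqref{eq:P alpha}, and the pointwise bound $\int_0^T \bS_\alpha(\eta,x,y)\diff\eta \le C_T \G(x,y)$ would follow from the Mainardi representation \eqref{eq:Salpha from S} together with the change of variable $\rho=\sigma\eta^\alpha$ and the boundedness of $\rho^{1/\alpha-1}\int_{\rho/T^\alpha}^\infty \Phi_\alpha(\sigma)\sigma^{-1/\alpha}\diff\sigma$ on $(0,\infty)$; Hölder's inequality plus \eqref{eq:Green AGV} then yields $\le C_T |A|^{1/p'}\delta(y)^\gamma$. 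The \emph{time modulus} uses the uniform bound $\Semi(\rho)[\delta^\gamma](y)\le C\delta(y)^\gamma$ (which follows from $\delta^\gamma\asymp\varphi_1$, secured by \eqref{eq:Green estimates}--\eqref{eq:G boundary regularity}, and $\Semi(\rho)\varphi_1=e^{-\lambda_1\rho}\varphi_1$): via \eqref{eq:Palpha from S} and \eqref{eq:Wright-moment},
\[
\int_0^{R}\!\int_\Omega \bP_\alpha(\eta,x,y)\delta(x)^\gamma\diff x\diff\eta = \int_0^\infty\Phi_\alpha(\sigma)\!\int_0^{\sigma R^\alpha}\Semi(\rho)[\delta^\gamma](y)\diff\rho\diff\sigma \le C R^\alpha\delta(y)^\gamma,
\]
and similarly $\int_{t_0}^{t_1}\Semi_\alpha(\eta)[\delta^\gamma](y)\diff\eta\le C(t_1-t_0)\delta(y)^\gamma$, since $\Semi_\alpha(\eta)[\delta^\gamma] = \int_0^\infty \Phi_\alpha(\sigma)\Semi(\sigma\eta^\alpha)[\delta^\gamma]\diff\sigma \le C\delta^\gamma$.

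\textbf{Main obstacle.} The essential technical ingredient, and the reason the $\alpha=1$ argument of \cite{Chan+GC+Vazquez2020parabolic} does not go through verbatim, is the uniform-in-$\rho$ boundedness of the heat semigroup on the weighted space $\delta^\gamma L^\infty(\Omega)$. The memory of the fractional time derivative brings all positive times into play simultaneously through the Mainardi mixing, so the semigroup regularisation trick of the local case is not directly available; one must instead use the $\delta^\gamma\asymp\varphi_1$ comparison to ensure the bound holds uniformly in $\rho$, and carefully track the moments of $\Phi_\alpha$. Once this is in place, the geometric-mean combination of the area and time moduli produces $\omega_T(t_1-t_0)\omega(|A|)\le C_T(t_1-t_0)^{\alpha/2}|A|^{1/(2p')}$, which vanishes as either $t_1-t_0\to 0$ or $|A|\to 0$, completing the proof.
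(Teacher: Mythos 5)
Your proposal is correct, and it reaches the same bound by a genuinely different route: you work directly on the representation formula via Fubini and kernel estimates (uniform in the slow variable $y$), whereas the paper proves the same inequality by duality, choosing explicit test functions $\phi = \chi_{[t_0,t_1]}\varphi_1$ (for the time modulus) and $\phi=\chi_A\varphi_1$ (for the space modulus), evaluating $\Heat[0,\phi,0]$ in closed form, and combining via Cauchy--Schwarz. Your version is the ``transpose'' of that argument, and the two share all the key ingredients: $\varphi_1\asymp\delta^\gamma$, the Green--kernel integrability \eqref{eq:Green AGV}, the identity $\int_0^\infty \bP_\alpha(\eta,x,y)\diff\eta=\G(x,y)$, and the geometric-mean combination of the two moduli. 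Two small comments are worth making. First, by going through the kernels you need a separate $\bS_\alpha$-area estimate $\int_0^T\bS_\alpha(\eta,x,y)\diff\eta\le C_T\G(x,y)$, and the boundedness of $\rho\mapsto \rho^{1/\alpha-1}\int_{\rho/T^\alpha}^\infty\Phi_\alpha(\sigma)\sigma^{-1/\alpha}\diff\sigma$ you invoke is not automatic, since $\int_0^\infty\Phi_\alpha(\sigma)\sigma^{-1/\alpha}\diff\sigma=\infty$ for $\alpha<1$; it does hold because $\Phi_\alpha(0)=1/\Gamma(1-\alpha)$ is finite, giving the exact cancellation $\rho^{1/\alpha-1}\cdot(\rho/T^\alpha)^{1-1/\alpha}=T^{1-\alpha}$ for small $\rho$, but this should be verified rather than asserted. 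The paper's dual formulation sidesteps this entirely: the $u_0$-contribution is handled by the single test function $\chi_{[t_0,t_1]}\varphi_1$, whose image under $\Heat[0,\cdot,0]$ involves only $P_\alpha(\cdot;\lambda_1)$, and the Riemann--Liouville integral $\riemannOrder{t}{\alpha-1}$ then trivially produces the $T^{1-\alpha}/\Gamma(2-\alpha)$ factor without touching $\bS_\alpha$ at all. Second, the ``main obstacle'' you identify (uniform boundedness of $\Semi(\rho)$ on $\delta^\gamma L^\infty$) is equally central in the paper's approach—it is not specific to going through the kernels—so the two proofs have the same hard core; what differs is only the bookkeeping.
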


Here and below $\omega$ represents a modulus of continuity, i.e. a non-decreasing, non-negative function such that $\omega(0^+) = 0$. We denote the dependence by sub-indexes. 

\begin{proof}
By splitting into positive and negative parts, we may assume that $u_0,f,u\geq 0$.

\medskip

{\noindent\bf Step 1: Time compactness.} Take $\phi(t,x)=\chi_{[t_0,t_1]}(t)\varphi_1(x)$, so that
\begin{align*}
\Heat[0,\phi,0](t,x)
&=\int_{0}^{t}
    P_\alpha(t-\tau;\lambda_1)
    \phi(\tau,x)
\diff \tau
=\int_{(t-t_1)_+}^{(t-t_0)_+}
    -\frac{1}{\lambda_1}
    \frac{d}{d\tau}E_\alpha(-\lambda_1 \tau^\alpha)
\diff \tau
\,\varphi_1(x)\\
&=\frac{E_\alpha(-\lambda_1(t-t_1)_+^\alpha)-E_\alpha(-\lambda_1(t-t_0)_+^\alpha))}{\lambda_1}
\varphi_1(x)
=\omega_T(t_1-t_0)\varphi_1(x).
\end{align*}
Hence,
\begin{align*}
\int_{t_0}^{t_1}\int_{\Omega}
    |u(t,x)|\delta(x)^\gamma
\diff x \diff t
\leq
    \omega_T(t_1-t_0)
    \left(
        \int_{\Omega}
        |u_0(x)|\delta^\gamma
        \diff x
        +\int_{0}^{T}\int_{\Omega}
            |f(t,x)|\delta(x)^\gamma
        \diff x \diff t
    \right).
\end{align*}

\medskip

{\noindent \bf Step 2: Space compactness.} Take $\phi(t,x)=\chi_A(x)\varphi_1(x)$, such that
\begin{align*}
\Heat[0,\phi,0](t,x)
&=\int_{0}^{t}
    P_\alpha(t-\tau;\Ls)[\phi](\tau,x)
\diff \tau
=\int_{0}^{t}
    P_\alpha(\tau)
\diff\tau
\,[\chi_A\varphi_1](x)\\
&\leq
\int_{0}^{\infty}
    P_\alpha(\tau)
\diff \tau
\,[\chi_A\varphi_1](x)
=\int_{0}^{\infty}
\left(
    \alpha \tau^{\alpha-1}
    \int_{0}^{\infty}
        \sigma
        \Phi_\alpha(\sigma)
        \Semi(\sigma \tau^\alpha)
    \diff \sigma
\right)
\diff \tau
\,[\chi_A\varphi_1](x)\\
&=\int_{0}^{\infty}
\left(
    \int_{0}^{\infty}
        \Semi(\sigma\tau^\alpha)
    \diff(\sigma\tau^\alpha)
\right)
    \Phi_\alpha(\sigma)
\diff\sigma
\,[\chi_A\varphi_1](x)
=\Green[\chi_A\varphi_1](x)\\
&\leq \omega(|A|)\delta(x)^\gamma,
\end{align*}
where the last estimate follows from \eqref{eq:Green AGV} and the argument in \cite[Lemma 7.3]{Chan+GC+Vazquez2020parabolic}. Consequently,
\begin{align*}
\int_{0}^{T}\int_{A}
    |u(t,x)|\delta(x)^\gamma
\diff x \diff t
\leq
    \omega(|A|)
    \left(
        \int_{\Omega}
        |u_0(x)|\delta(x)^\gamma
        \diff x
        +\int_{0}^{T}\int_{\Omega}
            |f(t,x)|\delta(x)^\gamma
        \diff x \diff t
    \right).
\end{align*}

\medskip

{\noindent \bf Step 3: Space-time compactness.} Using {\bf Step 1--Step 2}, we have
\begin{align*}
    \int_{t_0}^{t_1}\int_{A}
    &    |u(t,x)|\delta(x)^\gamma
    \diff x \diff t
    \leq
    \left(
    \int_{t_0}^{t_1}\int_{\Omega}
        |u(t,x)|\delta(x)^\gamma
    \diff x \diff t
    \right)^{\frac12}
    \left(
    \int_{0}^{T}\int_{A}
        |u(t,x)|\delta(x)^\gamma
    \diff x \diff t
    \right)^{\frac12}
    \\
    &\leq
        \omega_T(t_1-t_0)
        \omega(|A|)
        \left(
            \int_{\Omega}
            |u_0(x)|\delta(x)^\gamma
            \diff x
            +\int_{0}^{T}\int_{\Omega}
                |f(t,x)|\delta(x)^\gamma
            \diff x \diff t
        \right).
\end{align*}

\medskip

{\noindent \bf Step 4: Space-time compactness for signed data}
In general we split $u_0=(u_0)_+-(u_0)_-$ and $f=f_+-f_-$, which yields
\[
u=\Heat[(u_0)_+,f_+,0]-\Heat[(u_0)_-,f_-,0].
\]
Then {\bf Step 3} can be applied to each summand, completing the proof.
\end{proof}

To obtain $L^1_{\rm loc}(\Omega)$ compactness, we apply the above estimate to each $K\Subset \Omega$.

\begin{remark}
    Notice that the only crucial ingredients in the proof above is that $\varphi_1 \asymp \delta^\gamma$ and $\Green[\chi_A \varphi_1] \le \omega(|A|) \varphi_1$, which are minimal assumptions on $\Green$ that uses only mild integrability assumptions, not the exact shape. In Lipschitz domains, where it can happen that $\varphi_1 \not \asymp \delta^\gamma$ for any $\gamma$, the correct weight is $\varphi_1$.
\end{remark}

	\subsection{Well-posedness. Proof of \texorpdfstring{\Cref{thm:h = 0}}{Theorem \ref{thm:h = 0}}}

When $u_0$ and $f$ are regular, we have proven that \eqref{eq:H representation} is a spectral solution. As described in \Cref{sec:very weak formulation}, this solution is a weak solution. 

Let $u_0, f$ be in the general classes of the statement. They can be approximated by $u_{0k}, f_k$ smooth. 
Because of the \emph{a priori} estimates proven, $\Heat[u_{0k},f_k,0] \to \Heat[u_0,f,0]$ in $ L^1(0,T;  L^1(\Omega,\allowbreak \delta^\gamma))$. 
Due to the regularity of $\Heat[0,\phi,0]$ we can pass to the limit in the definition of weak-dual solution. This guarantees existence.

Finally, we prove the uniqueness. Assume there are two weak-dual solutions. Let $w$ be their difference. Since they share a right-hand side in \Cref{def:weak-dual} we recover, for each $T$ and $\phi$ smooth
$$
    \int_0^T \int_\Omega w(t,x) \phi(T-t,x) \diff x \diff t = 0.
$$
For any $K \Subset \Omega$, taking
$$
    \phi(t,x) =   \chi_K  (x) \sign w(T-t,x),
$$
we conclude that $w = 0$ a.e. in $[0,T] \times \Omega$. This completes the proof. 
\qed

\subsection{Sharp boundary behaviour for good data}

We derive estimates for
\begin{equation*}%
\bP_\alpha(t,x,y)
=\alpha t^{\alpha-1}
    \int_0^\infty
        \tau\Phi_\alpha(\tau)
        \bS(\tau t^\alpha,x,y)
    \diff\tau,
\qquad t>0,\,x,y\in\Omega,
\end{equation*}
using the corresponding estimates of the heat kernel $\bS$.

\begin{remark}
    We point out the pointwise estimate
    \begin{align*}
        \mathbb P_\alpha (t,x,y) 
        &=  
        \alpha t^{\alpha-1} 
        \int_0^\infty (\sigma t^{-\alpha} ) \Phi( \sigma t^{-\alpha} ) \p (\sigma ,x,y) \diff \sigma t^{-\alpha} \\
        &=  
        \alpha t^{-1} 
        \int_0^\infty (\sigma t^{-\alpha} ) \Phi( \sigma t^{-\alpha} ) \p (\sigma ,x,y) \diff \sigma  \\
        &\le  
        \alpha t^{-1} \| \tau \Phi(\tau) \|_{L^\infty} 
        \G(x,y).
    \end{align*}
    Unfortunately, in this direct  computation one loses some power of $t$ and the integrability in time. Alternatives, such as (weighted) integral estimates, will be used to fit our purposes.
\end{remark}

\begin{lemma}
\label{lem:D Pa 1}
We have that $\bP_\alpha(\cdot,x,\cdot)/\delta(x)^\gamma \in L^1(0,\infty;L^1(\Omega,\delta^\gamma))$, uniformly in $x\in\Omega$.
\end{lemma}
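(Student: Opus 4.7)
The plan is to compute $\int_0^\infty \bP_\alpha(t,x,y)\diff t$ exactly and recognize it as the Green kernel $\bG(x,y)$, thereby reducing the claim to the estimate \eqref{eq:Green AGV} on the Green function.

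First I would use Tonelli's theorem (justified since $\bS,\Phi_\alpha \ge 0$) to swap the $t$- and $\tau$-integrals in the representation \eqref{eq:Palpha from S}:
\begin{equation*}
\int_0^\infty \bP_\alpha(t,x,y)\diff t
=\int_0^\infty \tau\Phi_\alpha(\tau)\left(\int_0^\infty \alpha t^{\alpha-1}\bS(\tau t^\alpha,x,y)\diff t\right)\diff \tau.
\end{equation*}
In the inner integral I perform the substitution $\sigma=\tau t^\alpha$, for which $\diff \sigma=\alpha\tau t^{\alpha-1}\diff t$. This yields
\begin{equation*}
\int_0^\infty \alpha t^{\alpha-1}\bS(\tau t^\alpha,x,y)\diff t
=\frac{1}{\tau}\int_0^\infty \bS(\sigma,x,y)\diff \sigma=\frac{\bG(x,y)}{\tau},
\end{equation*}
where the last equality is the standard identity relating the heat semigroup and the Green operator, which is already used in the introduction. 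Together with the probability normalization $\int_0^\infty \Phi_\alpha(\tau)\diff \tau=1$ (which is \eqref{eq:Wright-moment} with $p=0$), this gives the identity
\begin{equation*}
\int_0^\infty \bP_\alpha(t,x,y)\diff t=\bG(x,y),\qquad x,y\in\Omega.
\end{equation*}

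Next, applying Tonelli again,
\begin{equation*}
\int_0^\infty\int_\Omega \frac{\bP_\alpha(t,x,y)}{\delta(x)^\gamma}\delta(y)^\gamma \diff y\diff t
=\int_\Omega \frac{\bG(x,y)}{\delta(x)^\gamma}\delta(y)^\gamma \diff y.
\end{equation*}
By \eqref{eq:Green AGV} the right-hand side is bounded in $L^p(\Omega)$ in $y$ (uniformly in $x$) for some $p>1$, and since $\Omega$ is bounded, Hölder's inequality converts this into the required $L^1$-bound
\begin{equation*}
\sup_{x\in\Omega}\int_\Omega \frac{\bG(x,y)}{\delta(x)^\gamma}\delta(y)^\gamma \diff y
\le |\Omega|^{1/p'}\sup_{x\in\Omega}\left(\int_\Omega \Big(\tfrac{\bG(x,y)}{\delta(x)^\gamma}\delta(y)^\gamma\Big)^p\diff y\right)^{1/p}<\infty,
\end{equation*}
which proves the claim.

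There is no serious obstacle here: the argument is essentially a Fubini exchange plus the scaling change of variables $\sigma=\tau t^\alpha$, together with the two facts that $\Phi_\alpha$ is a probability density and that $\bG=\int_0^\infty\bS(t,\cdot,\cdot)\diff t$. The only point requiring a bit of care is the justification of the two applications of Tonelli's theorem, but this follows from the nonnegativity of $\bS$, $\Phi_\alpha$, $\delta$.
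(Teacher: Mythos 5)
Your proposal is correct and follows essentially the same route as the paper: interchange integrals by Tonelli, substitute $\sigma=\tau t^\alpha$, recognize $\int_0^\infty\bS(\sigma,x,y)\diff\sigma=\bG(x,y)$ and $\int_0^\infty\Phi_\alpha=1$, then invoke \eqref{eq:Green AGV}. The only difference is cosmetic: you spell out the final H\"older step from the $L^p$ estimate in \eqref{eq:Green AGV} to the $L^1$ bound, which the paper leaves implicit.
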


\begin{proof}
We compute
\begin{align*}
\int_0^\infty\int_\Omega
    \frac{
        \bP_\alpha(t,x,y)
    }{\delta(x)^\gamma}
    \delta(y)^\gamma
\diff y\diff t
&=\int_0^\infty\int_\Omega
    \alpha t^{\alpha-1}
    \int_0^\infty
        \tau\Phi_\alpha(\tau)
        \frac{
            \bS(\tau t^\alpha,x,y)
        }{
            \delta(x)^\gamma
        }
    \diff\tau
\cdot\delta(y)^\gamma
\diff y\diff t\\
&=\int_0^\infty\int_\Omega
\int_0^\infty
    \Phi_\alpha(\tau)
    \frac{
        \bS(\tau t^\alpha,x,y)
    }{
        \delta(x)^\gamma
    }
    \alpha\tau t^{\alpha-1}
\diff t
\cdot\delta(y)^\gamma
\diff y\diff\tau\\
&=\int_0^\infty\int_\Omega
\int_0^\infty
    \Phi_\alpha(\tau)
    \frac{
        \bS(\sigma,x,y)
    }{
        \delta(x)^\gamma
    }
\diff\sigma
\cdot\delta(y)^\gamma
\diff y\diff\tau\\
&=\int_\Omega
    \frac{\bG(x,y)}{\delta(x)^\gamma}
    \delta(y)^\gamma
\diff y.
\end{align*}
Using \eqref{eq:Green AGV}, the latter integral is bounded uniformly for $x\in\Omega$, as desired.
\end{proof}

In \cite{Chan+GC+Vazquez2020parabolic} we proved that $\Semi(t) : \mathcal M (\Omega,\delta^\gamma) \to \delta^\gamma C(\overline \Omega)$ is a continuous operator for any $t > 0$. By the continuity of the linear operator $\Semi(t)$ and the semigroup property, we deduce that
\begin{equation*}
	\frac{ \p  (t,x,y)}{\delta(x)^\gamma} = \frac{\Semi(t)[\delta_y] (x)}{\delta(x)^\gamma }\in C((0,T] \times \overline \Omega \times \Omega).
\end{equation*}
Here $\delta_y$ is the Dirac delta distribution centred at $y$, whereas $\delta$ is the distance function. 
Due to this continuity, the following limit is well defined:
\begin{equation}
\label{eq:D S}
	D_\gamma \p  (t, \zeta , y) = \lim_{x \to \zeta}  \frac{ \p  (t,x,y)}{\delta(x)^\gamma}.
\end{equation}

\begin{lemma}
\label{lem:D Pa 2}
For any $\zeta\in\partial\Omega$,
\[
D_\gamma\bP_\alpha(\cdot,\zeta,\cdot)
:=\lim_{\Omega\ni x\to\zeta}
\frac{\bP_\alpha(\cdot,x,\cdot)}{\delta(x)^\gamma}
\]
exists in $L^1(0,\infty;L^1(\Omega,\delta^\gamma))$ and is equal to
\[
D_\gamma\bP_\alpha(t,\zeta,y)
=\alpha t^{\alpha-1}
\int_0^\infty
    \tau\Phi_\alpha(\tau)
    D_\gamma\bS(\tau t^\alpha,\zeta,y)
\diff\tau,
\qquad t>0,\,\zeta\in\partial\Omega,\,y\in\Omega.
\]
\end{lemma}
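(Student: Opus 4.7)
My plan is to establish the claim in two stages: first pointwise a.e.\ convergence via dominated convergence inside the Mainardi $\tau$-integral, then strong $L^1$ convergence via convergence of total masses and an appeal to Scheff\'e's lemma.

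For the pointwise limit, fix $t>0$ and $y\in\Omega$. Writing
\[
\frac{\bP_\alpha(t,x,y)}{\delta(x)^\gamma}
= \alpha t^{\alpha-1}\int_0^\infty \tau\Phi_\alpha(\tau)\frac{\bS(\tau t^\alpha,x,y)}{\delta(x)^\gamma}\diff\tau,
\]
note that for $x$ sufficiently close to $\zeta\in\partial\Omega$ one has $|x-y|\geq \dist(y,\partial\Omega)/2>0$, so the hypothesis \eqref{eq:heat-off-diag} yields
\[
\frac{\bS(\sigma,x,y)}{\delta(x)^\gamma}\leq C=C(y),\qquad \sigma\geq 0,
\]
uniformly in $x$ near $\zeta$. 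Since $\tau\Phi_\alpha(\tau)$ is integrable (by \eqref{eq:Wright-moment} with $p=1$), dominated convergence combined with the pointwise limit \eqref{eq:D S} gives
\[
\lim_{x\to\zeta}\frac{\bP_\alpha(t,x,y)}{\delta(x)^\gamma}
=\alpha t^{\alpha-1}\int_0^\infty \tau\Phi_\alpha(\tau) D_\gamma\bS(\tau t^\alpha,\zeta,y)\diff\tau =: D_\gamma\bP_\alpha(t,\zeta,y)
\]
for a.e.\ $(t,y)\in (0,\infty)\times\Omega$.

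For the $L^1$ convergence, I would then upgrade this pointwise statement via Scheff\'e's lemma. By the Fubini computation performed in the proof of \Cref{lem:D Pa 1},
\[
\int_0^\infty\int_\Omega \frac{\bP_\alpha(t,x,y)}{\delta(x)^\gamma}\delta(y)^\gamma\diff y\diff t
=\int_\Omega \frac{\bG(x,y)}{\delta(x)^\gamma}\delta(y)^\gamma\diff y.
\]
Using symmetry $\bG(x,y)=\bG(y,x)$ together with \eqref{eq:Martin estimates}, the integrand converges pointwise to $D_\gamma\bG(y,\zeta)\delta(y)^\gamma$; the two-sided bound \eqref{eq:Green estimates} supplies an integrable dominant in $y$. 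Hence
\[
\lim_{x\to\zeta}\int_0^\infty\int_\Omega \frac{\bP_\alpha(t,x,y)}{\delta(x)^\gamma}\delta(y)^\gamma\diff y\diff t
=\int_\Omega D_\gamma\bG(y,\zeta)\delta(y)^\gamma\diff y,
\]
and the same Fubini argument applied to the candidate $D_\gamma\bP_\alpha$ identifies this limit with $\int_0^\infty\int_\Omega D_\gamma\bP_\alpha(t,\zeta,y)\delta(y)^\gamma\diff y\diff t$. Since $\bP_\alpha/\delta(x)^\gamma$ and its candidate limit are non-negative, pointwise a.e.\ convergence together with convergence of $L^1(0,\infty;L^1(\Omega,\delta^\gamma))$-norms gives strong convergence in that space by Scheff\'e's lemma.

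The main obstacle is the dominated convergence inside the $\tau$-integral, which is precisely what the off-diagonal heat estimate \eqref{eq:heat-off-diag} was introduced to secure; without such a uniform bound, one would have to tame the small-$\sigma$ behaviour of $\bS(\sigma,x,y)/\delta(x)^\gamma$ against $\tau\Phi_\alpha(\tau)$, which is not obviously integrable. The mass convergence step is a more routine application of \eqref{eq:Green estimates}--\eqref{eq:Martin estimates}.
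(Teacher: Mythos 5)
Your proposal is correct in substance and reaches the same conclusion via a Scheff\'e-type argument, but it is organized at a different level than the paper's proof. The paper estimates the $L^1$ difference directly: pulling the absolute value inside the Mainardi integral and applying the same Fubini--change-of-variables reduction as in \Cref{lem:D Pa 1}, it bounds the quantity by $\int_\Omega\int_0^\infty|\bS(\sigma,x,y)/\delta(x)^\gamma - D_\gamma\bS(\sigma,\zeta,y)|\diff\sigma\,\delta(y)^\gamma\diff y$, and then appeals to the continuity of $\bS(\sigma,x,y)/\delta(x)^\gamma$ up to the boundary (Theorem~6.1 of \cite{Chan+GC+Vazquez2020parabolic}) together with dominated convergence at the level of $\bS$. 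It does not invoke \eqref{eq:heat-off-diag} in this proof. You instead first obtain pointwise convergence of $\bP_\alpha(t,x,y)/\delta(x)^\gamma$ at fixed $(t,y)$ by using \eqref{eq:heat-off-diag} to dominate the $\tau$-integrand uniformly, and then upgrade to $L^1$ via convergence of masses through the Green-kernel identity. These are genuinely different organizations of the argument; yours has the merit of making the ``convergence of masses'' step explicit -- which is actually needed, since the paper's nominal DCT dominant $\bS(\sigma,x,y)/\delta(x)^\gamma + D_\gamma\bS(\sigma,\zeta,y)$ depends on $x$ and the argument there is really a generalized DCT in disguise. One point you should tighten: the claim that \eqref{eq:Green estimates} ``supplies an integrable dominant in $y$'' for $\bG(x,y)\delta(y)^\gamma/\delta(x)^\gamma$ uniformly as $x\to\zeta$ is delicate because the natural bound $\delta(y)^\gamma/|x-y|^{d-2s+\gamma}$ still depends on $x$; a cleaner route is to combine the pointwise limit $\bG(x,y)/\delta(x)^\gamma\to D_\gamma\bG(y,\zeta)$ with the uniform $L^p$ bound \eqref{eq:Green AGV} (giving uniform integrability) and conclude by Vitali rather than a pointwise dominant.
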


\begin{proof}
We estimate
\begin{align*}
&\quad\;
\int_0^\infty\int_\Omega
\bigg|
    \frac{
        \bP_\alpha(t,x,y)
    }{\delta(x)^\gamma}
    -\alpha t^{\alpha-1}
    \int_0^\infty
        \tau\Phi_\alpha(\tau)
        D_\gamma\bS(\tau t^\alpha,\zeta,y)
    \diff\tau
\bigg|
    \delta(y)^\gamma
\diff y\diff t\\
&\leq\int_0^\infty\int_\Omega
    \alpha t^{\alpha-1}
    \int_0^\infty
        \tau\Phi_\alpha(\tau)
        \bigg|
        \frac{
            \bS(\tau t^\alpha,x,y)
        }{
            \delta(x)^\gamma
        }
        -D_\gamma\bS(\tau t^\alpha,\zeta,y)
        \bigg|
    \diff\tau
\cdot\delta(y)^\gamma
\diff y\diff t\\
&=\int_0^\infty\int_\Omega
\int_0^\infty
    \Phi_\alpha(\tau)
    \bigg|
    \frac{
        \bS(\tau t^\alpha,x,y)
    }{
        \delta(x)^\gamma
    }
    -D_\gamma\bS(\tau t^\alpha,\zeta,y)
    \bigg|
    \alpha\tau t^{\alpha-1}
\diff t
\cdot\delta(y)^\gamma
\diff y\diff\tau\\
&=\int_0^\infty\int_\Omega
\int_0^\infty
    \Phi_\alpha(\tau)
    \bigg|
    \frac{
        \bS(\sigma,x,y)
    }{
        \delta(x)^\gamma
    }
    -D_\gamma\bS(\sigma,\zeta,y)
    \bigg|
\diff\sigma
\cdot\delta(y)^\gamma
\diff y\diff\tau\\
&=\int_\Omega
    \int_0^\infty
    \bigg|
    \frac{
        \bS(\sigma,x,y)
    }{
        \delta(x)^\gamma
    }
    -D_\gamma\bS(\sigma,\zeta,y)
    \bigg|
    \diff\sigma
    \cdot
    \delta(y)^\gamma
\diff y.
\end{align*}
By \cite[Theorem 6.1]{Chan+GC+Vazquez2020parabolic}, the quotient $\bS(\sigma,x,y)/\delta(x)^\gamma$ is continuous in $x$ up to the boundary for each $(\sigma,y)\in(0,\infty)\times \Omega$. In particular, the last integrand tends to $0$ a.e. Moreover, this integrand is dominated by $    \frac{
        \bS(\sigma,x,y)
    }{
        \delta(x)^\gamma
    }
    +D_\gamma\bS(\sigma,\zeta,y)
$ which is integrable in $L^1(0,\infty;L^1(\Omega,\delta^\gamma))$ as shown in \Cref{lem:D Pa 1}. By Dominated Convergence Theorem, the last integral tends to zero as $x \to \zeta$. %
\end{proof}

Now we prove that if $\phi \in \delta^\gamma L^\infty ( (0,T)\times \Omega ) $, then so is $\Heat[0,\phi,0]$, and in addition $D_\gamma \Heat[0,\phi,0]$ exists.

\begin{lemma}
For any $\phi\in \delta^\gamma L^\infty((0,T)\times\Omega)$,
\[
\Heat[0,\phi,0](t,x)
=\int_0^t\int_\Omega
    \bP_\alpha(t-\tau,x,y)
    \phi(\tau,y)
\diff y \diff \tau
\]
lies also in $\delta^\gamma L^\infty((0,T)\times\Omega)$.
\end{lemma}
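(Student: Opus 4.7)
The plan is to reduce the bound on $\Heat[0,\phi,0]$ to the identity already extracted in the proof of \Cref{lem:D Pa 1} and then close the estimate using the boundary regularity hypothesis \eqref{eq:G boundary regularity}. First, write $\phi(t,y) = \delta(y)^\gamma \psi(t,y)$ with $\psi \in L^\infty((0,T)\times\Omega)$, and note that $\bP_\alpha \geq 0$ because $\Phi_\alpha \geq 0$ and $\bS \geq 0$. Hence it suffices to prove that
\[
K(t,x) \defeq \int_0^t \int_\Omega \bP_\alpha(t-\tau,x,y)\,\delta(y)^\gamma \diff y\diff\tau \leq C\,\delta(x)^\gamma
\]
uniformly in $(t,x) \in (0,T)\times\Omega$, since then $|\Heat[0,\phi,0](t,x)| \leq \|\psi\|_\infty K(t,x)$.

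The second step is to use the nonnegativity of the kernel to extend the time integral: $K(t,x) \leq \int_0^\infty \int_\Omega \bP_\alpha(r,x,y)\,\delta(y)^\gamma \diff y\diff r$. By the exact Fubini computation carried out in the proof of \Cref{lem:D Pa 1} (substituting $\sigma t^\alpha = s$ in the $\tau$-integral after swapping orders, and using $\int_0^\infty \Phi_\alpha(\sigma)\diff\sigma = 1$ from \eqref{eq:Wright-moment} with $p=0$), this full time integral collapses to
\[
\int_0^\infty\int_\Omega \bP_\alpha(r,x,y)\,\delta(y)^\gamma \diff y \diff r = \int_\Omega \bG(x,y)\,\delta(y)^\gamma \diff y = \Green[\delta^\gamma](x).
\]

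The third and final step is to invoke \eqref{eq:G boundary regularity} applied to the constant function $1 \in L^\infty(\Omega)$, i.e.\ to the representative $\delta^\gamma \in \delta^\gamma L^\infty(\Omega)$, which yields $\Green[\delta^\gamma] \in \delta^\gamma C(\overline\Omega)$ and hence $\Green[\delta^\gamma](x) \leq C\,\delta(x)^\gamma$ on $\Omega$. Combining,
\[
|\Heat[0,\phi,0](t,x)| \leq C\,\|\psi\|_\infty\,\delta(x)^\gamma,
\]
independently of $t \in (0,T)$, which is the desired $\delta^\gamma L^\infty$ bound.

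No serious obstacle is expected: the proof is essentially bookkeeping. The only point to watch is the Fubini interchange in step two, which is permitted because all integrands are nonnegative; the absolute integrability of the full $(r,y)$-integral is a by-product of the computation itself, since it equals the finite quantity $\Green[\delta^\gamma](x)$ bounded by \eqref{eq:G boundary regularity}. The sign assumption on $\phi$ is avoided by linearity of the definition of $\Heat[0,\phi,0]$ in $\phi$, so the final bound holds for signed $\phi$ with $\|\phi\|_{\delta^\gamma L^\infty}$ in place of $\|\psi\|_\infty$.
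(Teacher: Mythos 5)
Your proposal is correct and follows essentially the same route as the paper: you bound $\phi$ by $\|\phi/\delta^\gamma\|_\infty\,\delta^\gamma$, extend the time integral to $(0,\infty)$ by positivity, and collapse it to the Green operator acting on $\delta^\gamma$. The only difference is in the closing step: the paper simply cites \Cref{lem:D Pa 1}, whose proof bounds $\int_\Omega \bG(x,y)\delta(y)^\gamma\,\diff y / \delta(x)^\gamma$ via \eqref{eq:Green AGV}, while you invoke \eqref{eq:G boundary regularity} directly on $\delta^\gamma\cdot 1$. Both hypotheses are available in this setting and both give the uniform bound, so this is a cosmetic variation rather than a genuinely different argument.
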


\begin{proof}
We estimate
\begin{align*}
\frac{\Heat[0,\phi,0](t,x)}{\delta(x)^\gamma}
&=\int_0^t\int_\Omega
    \frac{\bP_\alpha(t-\tau,x,y)}{\delta(x)^\gamma}
    \phi(\tau,y)
\diff y \diff \tau\\
&\leq
\norm[L^\infty((0,T)\times\Omega)]{
    \frac{\phi}{\delta^\gamma}
}
\int_0^t\int_\Omega
    \frac{\bP_\alpha(t-\tau,x,y)}{\delta(x)^\gamma}
    \delta(y)^\gamma
\diff y \diff \tau\\
&\leq
\norm[L^\infty((0,T)\times\Omega)]{
    \frac{\phi}{\delta^\gamma}
}
\int_0^\infty\int_\Omega
    \frac{\bP_\alpha(\tau,x,y)}{\delta(x)^\gamma}
    \delta(y)^\gamma
\diff y \diff \tau.
\end{align*}
By \Cref{lem:D Pa 1}, the last double integral is bounded by a uniform constant, as desired.
\end{proof}

\begin{lemma}
For any $\phi\in \delta^\gamma L^\infty((0,T)\times\Omega)$, 
\[
D_\gamma\Heat[0,\phi,0](t,\zeta)
:=\lim_{\Omega\ni x\to\zeta}
\frac{\Heat[0,\phi,0](t,x)}{\delta(x)^\gamma}
\]
exists in $L^\infty((0,T)\times\partial\Omega)$ and is equal to
\[
D_\gamma\Heat[0,\phi,0](t,\zeta)
=\int_0^t\int_\Omega
    D_\gamma\bP_\alpha(t-\tau,\zeta,y)
    \phi(\tau,y)
\diff y \diff \tau.
\]
\end{lemma}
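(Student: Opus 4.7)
The plan is to promote the $L^1(0,\infty;L^1(\Omega,\delta^\gamma))$ convergence provided by the preceding lemma (\Cref{lem:D Pa 2}) to pointwise convergence of the integral against the bounded weighted function $\phi/\delta^\gamma$. The representation of $\Heat[0,\phi,0]$ makes it natural to factor
\[
\frac{\Heat[0,\phi,0](t,x)}{\delta(x)^\gamma}
=\int_0^t\int_\Omega
    \frac{\bP_\alpha(t-\tau,x,y)}{\delta(x)^\gamma}
    \delta(y)^\gamma
    \cdot\frac{\phi(\tau,y)}{\delta(y)^\gamma}
\diff y \diff \tau,
\]
which exhibits the integrand as the product of an $L^1$-function (in $(\tau,y)$ with measure $\diff\tau\otimes\delta(y)^\gamma\diff y$) and an $L^\infty$-function.

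Fix $t\in(0,T)$ and $\zeta\in\partial\Omega$. Subtracting the candidate limit and using $|\phi(\tau,y)|\leq\|\phi/\delta^\gamma\|_{L^\infty}\delta(y)^\gamma$, I would estimate
\[
\Bigg|\frac{\Heat[0,\phi,0](t,x)}{\delta(x)^\gamma}-\int_0^t\int_\Omega D_\gamma\bP_\alpha(t-\tau,\zeta,y)\phi(\tau,y)\diff y\diff\tau\Bigg|
\leq
\Big\|\tfrac{\phi}{\delta^\gamma}\Big\|_{L^\infty}\,I(x,\zeta),
\]
where, after extending the $\tau$-integration to $(0,\infty)$ and changing variables,
\[
I(x,\zeta):=\int_0^\infty\int_\Omega
\bigg|\frac{\bP_\alpha(\sigma,x,y)}{\delta(x)^\gamma}-D_\gamma\bP_\alpha(\sigma,\zeta,y)\bigg|\delta(y)^\gamma\diff y\diff\sigma.
\]
By \Cref{lem:D Pa 2}, $I(x,\zeta)\to 0$ as $\Omega\ni x\to\zeta$, so the limit defining $D_\gamma\Heat[0,\phi,0](t,\zeta)$ exists and equals the claimed integral. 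Since the bound is uniform in $t$, the same argument gives both existence of the pointwise limit in $\zeta$ and a uniform estimate.

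For the $L^\infty((0,T)\times\partial\Omega)$ conclusion I would bound the candidate directly:
\[
\bigg|\int_0^t\int_\Omega D_\gamma\bP_\alpha(t-\tau,\zeta,y)\phi(\tau,y)\diff y\diff\tau\bigg|
\leq\Big\|\tfrac{\phi}{\delta^\gamma}\Big\|_{L^\infty}\int_0^\infty\int_\Omega D_\gamma\bP_\alpha(\sigma,\zeta,y)\delta(y)^\gamma\diff y\diff\sigma.
\]
The last double integral is controlled, independently of $\zeta$, by the computation in \Cref{lem:D Pa 1}: taking $x\to\zeta$ inside that identity (justified by Fatou together with the $L^1$-convergence of \Cref{lem:D Pa 2}) gives the same Green-kernel bound $\int_\Omega\bG$-type quantity which is finite by \eqref{eq:Green AGV}. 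This yields the desired $L^\infty$ estimate.

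The only non-routine point is checking that the $L^1$-convergence of \Cref{lem:D Pa 2} transfers cleanly to convergence of the single integral on $[0,t]$ (rather than $[0,\infty)$), but this follows from monotonicity of the absolute value and extension by zero; no dominated convergence argument in $x$ is needed beyond what is already contained in the previous two lemmas. Thus the proof reduces to the clean display above together with an application of \Cref{lem:D Pa 1,lem:D Pa 2}.
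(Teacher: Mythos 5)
Your proposal matches the paper's proof essentially line by line: pull out the $L^\infty$ norm of $\phi/\delta^\gamma$, extend the $\tau$-integral to $(0,\infty)$, and invoke \Cref{lem:D Pa 2} to conclude that the resulting integral tends to zero as $x\to\zeta$. The extra paragraph you add to justify the uniform $L^\infty((0,T)\times\partial\Omega)$ bound via \Cref{lem:D Pa 1} is a reasonable and correct supplement — the paper leaves that step implicit.
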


\begin{proof}
We estimate
\begin{align*}
&\quad\;
\bigg|
\frac{\Heat[0,\phi,0](t,x)}{\delta(x)^\gamma}
-\int_0^t\int_\Omega
    D_\gamma\bP_\alpha(t-\tau,\zeta,y)
    \phi(\tau,y)
\diff y \diff \tau
\bigg|\\
&\leq
\norm[L^\infty((0,T)\times\Omega)]{
    \frac{\phi}{\delta^\gamma}
}
\int_0^t\int_\Omega
\bigg|
    \frac{\bP_\alpha(t-\tau,x,y)}{\delta(x)^\gamma}
    -D_\gamma\bP_\alpha(t-\tau,\zeta,y)
\bigg|
\,\delta(y)^\gamma
\diff y \diff \tau\\
&\leq
\norm[L^\infty((0,T)\times\Omega)]{
    \frac{\phi}{\delta^\gamma}
}
\int_0^\infty\int_\Omega
\bigg|
    \frac{\bP_\alpha(\tau,x,y)}{\delta(x)^\gamma}
    -D_\gamma\bP_\alpha(\tau,\zeta,y)
\bigg|
\,\delta(y)^\gamma
\diff y \diff \tau\\
\end{align*}
By \Cref{lem:D Pa 2}, the last integral %
converges to $0$ as $x\to\zeta$. %
\end{proof}

\section{Singular boundary condition when \texorpdfstring{$h \ne 0$}{h!=0}}
\label{sec:h ne 0}

\subsection{Concentration of \texorpdfstring{$f$}{f} towards singular boundary data}

\begin{definition} 
    We define a very weak solution for $u_0 = 0, f = 0, $ and $h \ne 0$ as a function $u \in L^1 (0,T; L^1(\Omega,\delta^\gamma))$ which satisfies
    \begin{align}
    \label{eq:very weak h ne 0}
        \int_0^T \int_\Omega u(t,x) \phi(T-t,x) \diff x\diff t
        &= \int_0^T \int_{\partial \Omega} h(t, \zeta) D_\gamma \mathcal H[0,\phi,0] (T-t, \zeta) \diff \zeta \diff t,
    \end{align}
for any $\phi\in \delta^\gamma L^\infty((0,T)\times\Omega)$.
\end{definition}
Given this definition, uniqueness is trivial.

\begin{lemma}
\label{lem:unique}
Suppose $u\in L^1(0,T;L^1_{\rm loc}(\Omega))$ satisfies that for any $\phi\in L^\infty(0,T;L^\infty_c(\Omega))$,
\[
\int_0^T \int_\Omega u(t,x) \phi(T-t,x) \diff x\diff t=0,
\]
then $u\equiv 0$ in $\Omega$. In particular, the same implication holds for $u\in L^1(0,T;L^1(\Omega,\delta^\gamma))$ with test functions $\phi\in\delta^\gamma L^\infty((0,T)\times\Omega)$.
\end{lemma}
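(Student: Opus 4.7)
The plan is to reduce the vanishing of an integral against a wide class of test functions to an $L^1$ vanishing on compact subsets, by using indicator/sign test functions concentrated away from the boundary. The statement is essentially a weighted version of the standard fundamental lemma of the calculus of variations; the only subtlety is handling the weight $\delta^\gamma$ and the time reversal $t \mapsto T-t$ in the pairing.

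First I would treat the unweighted statement. Given $u \in L^1(0,T;L^1_{\rm loc}(\Omega))$ satisfying the hypothesis, I pick an arbitrary compact set $K \Subset \Omega$ and use the test function
\[
\phi(s,x) \defeq \chi_K(x)\,\sign\bigl(u(T-s,x)\bigr),
\]
which is measurable (since $u$ is), bounded by $1$, and compactly supported in $\Omega$ uniformly in $s \in (0,T)$, hence lies in $L^\infty(0,T;L^\infty_c(\Omega))$. After the change of variable $s = T-t$ the hypothesis becomes
\[
0 = \int_0^T \int_\Omega u(t,x)\,\phi(T-t,x)\,\diff x\diff t
= \int_0^T \int_K |u(t,x)|\,\diff x\diff t,
\]
so $u = 0$ a.e.\ on $(0,T)\times K$. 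Exhausting $\Omega$ by an increasing sequence of such $K$ yields $u \equiv 0$ a.e.\ on $(0,T)\times \Omega$.

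For the second assertion I would simply observe the inclusion of test spaces: any $\psi \in L^\infty(0,T;L^\infty_c(\Omega))$ has spatial support inside some $K \Subset \Omega$, and since $\delta$ is bounded below by a positive constant on $K$, the function $\psi/\delta^\gamma$ (extended by zero outside $K$) belongs to $L^\infty((0,T)\times\Omega)$. Thus $L^\infty(0,T;L^\infty_c(\Omega)) \subset \delta^\gamma L^\infty((0,T)\times\Omega)$. Moreover, the same lower bound on $\delta^\gamma$ on compact subsets shows $L^1(0,T;L^1(\Omega,\delta^\gamma)) \subset L^1(0,T;L^1_{\rm loc}(\Omega))$. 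Therefore the hypothesis of the weighted statement implies that of the unweighted one, and the previous step gives the conclusion.

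There is no real obstacle here; the proof is essentially bookkeeping. The only point that requires a moment of care is the measurability of $(s,x) \mapsto \sign(u(T-s,x))$ as a function on $(0,T)\times\Omega$, which follows from Fubini together with the joint measurability of $u$ guaranteed by $u \in L^1(0,T;L^1_{\rm loc}(\Omega))$.
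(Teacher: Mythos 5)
Your proof is correct and follows essentially the same route as the paper: test against $\phi(s,x)=\chi_K(x)\,\sign u(T-s,x)$, conclude $\int_0^T\int_K|u|=0$, and exhaust $\Omega$ by compact subsets. The paper leaves the inclusion of test-function classes and the weighted-to-local reduction implicit, which you spell out, but there is no difference in substance.
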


\begin{proof}
For every $K\Subset\Omega$, choosing $\phi(t,x)=\sign u(T-t,x) \chi_{K}(x)$  yields $\int_0^T\int_K |u| \diff x\diff t=0$.
\end{proof}

First we check that the solution lies in the correct weighted space.

\begin{lemma}
Given $h\in L^1((0,T)\times \partial\Omega)$, $u=\Heat[0,0,h]$ given by \eqref{eq:H representation} lies in $L^1(0,T;L^1(\Omega,\delta^\gamma))$. Moreover,
\[
\int_0^T\int_{\Omega}
    u(x,t)
    \delta(x)^\gamma
\diff x\diff t
\leq C\int_0^T\int_{\partial\Omega}
    h(\tau,\zeta)
\diff\zeta\diff\tau.
\]
\end{lemma}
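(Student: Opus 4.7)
The plan is to insert the explicit representation
\[
u(t,x) = \int_0^t \int_{\partial\Omega} D_\gamma \bP_\alpha(t-\tau, x, \zeta) h(\tau,\zeta) \diff\zeta \diff\tau
\]
from \eqref{eq:H representation} into the weighted $L^1$ norm; since the kernel is non-negative we have $|u| \leq \int\int D_\gamma \bP_\alpha \cdot |h|$, and Fubini together with the substitution $\sigma = t-\tau$ give
\[
\int_0^T\int_\Omega |u(t,x)|\delta(x)^\gamma\diff x\diff t \leq \int_0^T \int_{\partial\Omega} |h(\tau,\zeta)| \left( \int_0^{\infty} \int_\Omega D_\gamma \bP_\alpha(\sigma, x, \zeta) \delta(x)^\gamma \diff x \diff\sigma \right) \diff\zeta \diff\tau.
\]
The lemma thus reduces to showing that the parenthesised integral is bounded by a constant independent of $\zeta\in\partial\Omega$.

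To establish this uniform bound I would replay the computation in \Cref{lem:D Pa 1}. Using \eqref{eq:exchange} to expand $D_\gamma \bP_\alpha$ in terms of $D_\gamma \bS$, Fubini (all integrands are non-negative), and the change of variable $w = \tau\sigma^\alpha$, the pre-factor $\tau$ in $\tau\Phi_\alpha(\tau)$ cancels exactly the Jacobian $1/\tau$. The $\sigma$--$\tau$ double integral then decouples into $\int_0^\infty \Phi_\alpha(\tau)\diff\tau = 1$ (by \eqref{eq:Wright-moment} with $p=0$) times a pure time integral of $D_\gamma\bS$, which the second identity in \eqref{eq:exchange} identifies with the Martin kernel:
\[
\int_0^{\infty} \int_\Omega D_\gamma \bP_\alpha(\sigma, x, \zeta) \delta(x)^\gamma \diff x \diff\sigma = \int_\Omega D_\gamma\bG(x,\zeta)\,\delta(x)^\gamma \diff x.
\]

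It remains to bound this last integral uniformly for $\zeta \in \partial\Omega$. The Martin two-sided estimate \eqref{eq:Martin estimates} together with the elementary inequality $\delta(x)\leq |x-\zeta|$ valid for $\zeta\in\partial\Omega$ yield
\[
D_\gamma\bG(x,\zeta)\delta(x)^\gamma \leq C\,\frac{\delta(x)^{2\gamma}}{|x-\zeta|^{d-2s+2\gamma}} \leq C\,|x-\zeta|^{-(d-2s)},
\]
which has an integrable singularity on the bounded set $\Omega$, with a constant independent of $\zeta$, since $s>0$. The main obstacle is essentially bookkeeping: keeping straight which spatial slot of $\bP_\alpha$, $\bS$, and $\bG$ the $D_\gamma$ acts on in the representation formula versus in the preceding lemmas, and checking that the hypothesis \eqref{eq:exchange} is invoked precisely where the Fubini-plus-change-of-variables manoeuvre needs it.
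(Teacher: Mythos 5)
Your proposal is correct and takes essentially the same route as the paper: Fubini plus the time translation $\sigma=t-\tau$ reduce everything to the uniform-in-$\zeta$ bound on $\int_0^\infty\int_\Omega D_\gamma\bP_\alpha(\sigma,x,\zeta)\,\delta(x)^\gamma\diff x\diff\sigma$. Where the paper then simply cites \Cref{lem:D Pa 2} (which in turn rests on \Cref{lem:D Pa 1} and \eqref{eq:Green AGV}), you unpack that black box: invoking \eqref{eq:exchange}, the substitution $w=\tau\sigma^\alpha$ (killing the $\alpha\tau\sigma^{\alpha-1}$ prefactor), and $\int_0^\infty\Phi_\alpha=1$ identify the double integral as $\int_\Omega D_\gamma\bG(x,\zeta)\,\delta(x)^\gamma\diff x$, and you then bound this directly from the Martin estimate \eqref{eq:Martin estimates} together with $\delta(x)\le|x-\zeta|$ to get the integrable bound $C|x-\zeta|^{2s-d}$. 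This is a modest but genuine tightening of the paper's argument — it makes the uniformity in $\zeta$ of the constant $C$ explicit rather than implicit in the chain \Cref{lem:D Pa 2}$\to$\Cref{lem:D Pa 1}$\to$\eqref{eq:Green AGV}, and it bypasses the dominated-convergence step of \Cref{lem:D Pa 2} by producing a closed-form majorant. You correctly flag the one point of care — keeping track of which slot $D_\gamma$ acts on in $\bP_\alpha$, $\bS$, $\bG$ — and since these kernels are symmetric in their spatial arguments there is no real obstruction there.
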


\begin{proof}
We express
\begin{align*}
\int_0^T u(t,x)\diff t
&=\int_0^T\int_0^t\int_{\partial\Omega}
    D_\gamma\bP_\alpha(t-\tau,x,\zeta)
    h(\tau,\zeta)
\diff\zeta \diff\tau \diff t\\
&=\int_0^T\int_{\partial\Omega}
\bigg[
\int_\tau^T
    D_\gamma\bP_\alpha(t-\tau,x,\zeta)
\diff t
\bigg]
    \cdot h(\tau,\zeta)
\diff\zeta\diff\tau\\
&=\int_0^T\int_{\partial\Omega}
\bigg[
    \int_{0}^{T-\tau}
    D_\gamma\bP_\alpha(T-\tau-t,x,\zeta)
    \diff t
\bigg]
\cdot h(\tau,\zeta)
\diff\zeta\diff\tau\\
&=\int_0^T\int_{\partial\Omega}
\bigg[
    \int_{0}^{T-\tau}
    D_\gamma\bP_\alpha(t,x,\zeta)
    \diff t
\bigg]
\cdot h(\tau,\zeta)
\diff\zeta\diff\tau\\
&\leq
\int_{\partial\Omega}
    \bigg[\int_0^\infty
    D_\gamma\bP_\alpha(t,x,\zeta)
    \diff t\bigg]
    \bigg[\int_0^T
    h(\tau,\zeta)
    \diff\tau\bigg]
\diff\zeta.
\end{align*}
Using \Cref{lem:D Pa 2}, the last $t$-integral is in $L^1(\Omega,\delta^\gamma)$ (in variable $x$) and hence the result follows.
\end{proof}

Integrating by parts, we see that the only possible solution is precisely \eqref{eq:H representation}. 

\begin{lemma}
\label{lem:representation 00h}
Given $h\in L^1((0,T)\times \partial\Omega)$, $u=\Heat[0,0,h]$ given by \eqref{eq:H representation} satisfies \eqref{eq:very weak h ne 0}.
\end{lemma}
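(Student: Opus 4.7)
The plan is to substitute the representation \eqref{eq:H representation} of $\Heat[0,0,h]$ into the left-hand side of \eqref{eq:very weak h ne 0}, apply Fubini to rearrange the resulting quadruple integral as $(\tau,\zeta)$-outer and $(t,x)$-inner, perform a time reversal $\sigma=T-t$, and use the symmetry of $\bP_\alpha$ together with the representation of $D_\gamma\cH[0,\phi,0]$ established in the preceding paragraphs to identify the inner double integral.

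Concretely, I would unroll
\[
u(t,x) = \int_0^t\int_{\partial\Omega} D_\gamma\bP_\alpha(t-\tau,x,\zeta)\,h(\tau,\zeta)\,\diff\zeta\,\diff\tau
\]
inside $\int_0^T\int_\Omega u(t,x)\phi(T-t,x)\diff x\,\diff t$ and dominate $|\phi(T-t,x)| \le \|\phi/\delta^\gamma\|_{L^\infty}\,\delta(x)^\gamma$. Combined with the uniform bound
\[
\sup_{\zeta\in\partial\Omega}\int_0^\infty\int_\Omega D_\gamma\bP_\alpha(s,x,\zeta)\,\delta(x)^\gamma\,\diff x\,\diff s \le C,
\]
this gives absolute integrability and so Fubini applies. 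The displayed bound is \Cref{lem:D Pa 2} read with the spatial slots of $\bP_\alpha$ swapped, which is legitimate because $\bP_\alpha(t,x,y)=\bP_\alpha(t,y,x)$ (inherited from the symmetry of $\bS$), and hence $D_\gamma\bP_\alpha(s,x,\zeta)=D_\gamma\bP_\alpha(s,\zeta,x)$ after taking the boundary limit.

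After placing $(\tau,\zeta)$ on the outside and changing variables $\sigma = T - t$, the inner integral is
\[
\int_0^{T-\tau}\int_\Omega D_\gamma\bP_\alpha(T-\tau-\sigma,\zeta,x)\,\phi(\sigma,x)\,\diff x\,\diff\sigma,
\]
which equals $D_\gamma\cH[0,\phi,0](T-\tau,\zeta)$ by the representation formula for the $\gamma$-normal derivative of $\cH[0,\phi,0]$ proved in the preceding lemma. Reassembling the identity produces exactly \eqref{eq:very weak h ne 0}.

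The main (and essentially only) obstacle is the bookkeeping of the Fubini step and the observation that $D_\gamma$ may be taken indifferently in either spatial slot of $\bP_\alpha$; both are direct consequences of \Cref{lem:D Pa 1}--\Cref{lem:D Pa 2} and the symmetry of the heat kernel $\bS$, so no new analytic ingredient beyond the lemmas already established is required.
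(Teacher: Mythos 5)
Your proposal is correct and follows essentially the same route as the paper's proof: substitute the representation of $\Heat[0,0,h]$, apply Fubini, reverse time, and recognize the inner integral as $D_\gamma\Heat[0,\phi,0](T-\tau,\zeta)$ via the preceding lemma. The only difference is that you spell out the absolute-integrability justification for Fubini (via \Cref{lem:D Pa 2} and symmetry of $\bP_\alpha$), which the paper leaves implicit.
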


\begin{proof}
Keeping in mind that
\[\begin{split}
\Heat[0,\phi,0](t,x)
&=\bigg[\int_0^t \cP_\alpha(t-\tau)\phi(\tau) \diff\tau\bigg](x)
=\int_0^t\int_\Omega
    \bP_\alpha(t-\tau,x,y)
\phi(\tau,y) \diff y \diff \tau,
\end{split}\]
we verify that
\begin{align*}
&\quad\;
\int_0^T\int_\Omega
    \bigg[
    \int_0^t\int_{\partial\Omega}
        (D_\gamma\bP_\alpha)(t-\tau,x,\zeta)
        h(\tau,\zeta)
    \diff\zeta\diff\tau
    \bigg]
    \phi(T-t,x)
\diff x\diff t\\
&=\int_0^T\int_{\partial\Omega}
\bigg[
    \int_\tau^T\int_{\Omega}
        (D_\gamma\bP_\alpha)(t-\tau,x,\zeta)
        \phi(T-t,x)
    \diff x\diff t
\bigg]
h(\tau,\zeta)
\diff\zeta\diff\tau\\
&=\int_0^T\int_{\partial\Omega}
D_\gamma
\bigg[
    \int_{0}^{T-\tau}\int_{\Omega}
        \bP_\alpha(T-t-\tau,x,\cdot)
        \phi(t,x)
    \diff x\diff t
\bigg](\zeta)\,
h(\tau,\zeta)
\diff\zeta\diff\tau\\
&=\int_0^T\int_{\partial\Omega}
    D_\gamma\Heat[0,\phi,0](T-\tau,\zeta)
    \,h(\tau,\zeta)
\diff\zeta\diff\tau.\qedhere
\end{align*}
\end{proof}

\subsection{Well-posedness when 
    \texorpdfstring{$u_0, f = 0$ and $h \ne 0$}{u0, f = 0 and h != 0}. Proof of \texorpdfstring{\Cref{thm:h ne 0}}{Theorem \ref{thm:h ne 0}} i) }

The proof is structured in several steps, using the previous lemmas: 
\begin{enumerate}
		\item
		Due to compactness from \Cref{lem:compactness}, there is at least a convergent subsequence of $\mathcal H[0,\allowbreak f_j,0]$ in the sense $L^1 (0,T ; L^1 (\Omega, \delta^\gamma))$ to some function $u$.
		
		\item 
		By passing to the limit in \eqref{eq:very weak h = 0} we observe that $u$ satisfies \eqref{eq:very weak h ne 0}. 
		
		\item  Due to \Cref{lem:unique}, $u$ is the unique $L^1(0,T; L^1 (\Omega, \delta^\gamma))$ solution of \eqref{eq:very weak h ne 0}. By uniqueness of the weak limit, we deduce the convergence of the whole sequence $\mathcal H[0,f_j,0]$.
		
		\item Due to \Cref{lem:representation 00h}, $u$ is given precisely by \eqref{eq:H representation}. 
	\end{enumerate}
	This completes the proof.
\qed

	\subsection{\texorpdfstring{$\mathcal H[0,0,h]$}{H[0,0,h]} satisfies the singular boundary condition. Proof of \texorpdfstring{\Cref{thm:h ne 0}}{Theorem \ref{thm:h ne 0}} ii) }

Now we want to see whether \eqref{eq:singular boundary condition} holds.
We observe that \eqref{eq:H**} is a form of saying that $D_\gamma \mathbb S$ uniformly localises at $t = 0$ on the boundary.

\begin{theorem}
 Let $h \in C(\partial \Omega)$ and assume \eqref{eq:H**}, in addition to the main assumptions throughout the paper. Then \eqref{eq:singular boundary condition} holds.
\end{theorem}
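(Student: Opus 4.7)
The plan is to rewrite the quotient $\Heat[0,0,h](t,x)/u^\star(x)$ as a normalised average of $h$ against a non-negative kernel, and then show that the kernel concentrates at $(t,\zeta_0)$ as $x \to \zeta_0$. Applying the first identity in \eqref{eq:exchange} inside \eqref{eq:H representation}, swapping the order of integration (Fubini), and performing the substitution $\sigma = s\tau^\alpha$ in the $\tau$-variable gives
\[
\Heat[0,0,h](t,x) = \int_0^\infty \Phi_\alpha(s) \int_0^{st^\alpha} \int_{\partial\Omega} D_\gamma \bS(\sigma,x,\zeta)\, h\!\left(t - (\sigma/s)^{1/\alpha},\zeta\right) d\zeta\,d\sigma\,ds .
\]
The same manipulation combined with the second identity in \eqref{eq:exchange} and $\int_0^\infty \Phi_\alpha = 1$ yields
\[
u^\star(x) = \int_0^\infty \Phi_\alpha(s)\int_0^\infty \int_{\partial\Omega} D_\gamma \bS(\sigma,x,\zeta)\,d\zeta\,d\sigma\,ds .
\]

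Subtracting $h(t,\zeta_0)$ from $\Heat[0,0,h](t,x)/u^\star(x)$ splits the difference as $I_1(x) - I_2(x)$, where $I_2(x)$ equals $h(t,\zeta_0)$ times the $\sigma$-tail $(st^\alpha,\infty)$ normalised by $u^\star(x)$, while $I_1(x)$ is the normalised integral of the pointwise increment $h(t - (\sigma/s)^{1/\alpha},\zeta) - h(t,\zeta_0)$. The term $I_2(x) \to 0$ as $x \to \zeta_0$ is exactly the content of hypothesis \eqref{eq:H**} applied with $\delta = t$.

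For $I_1$, given $\eps > 0$ I use continuity of $h$ to pick $r \in (0,t)$ such that $|h(t-\eta,\zeta) - h(t,\zeta_0)| < \eps$ whenever $\eta < r$ and $|\zeta-\zeta_0| < r$. I then partition the $(\sigma,\zeta)$-integration into the ``good'' region $\{\sigma < sr^\alpha\} \cap \{\zeta \in B_r(\zeta_0)\cap\partial\Omega\}$ and its complement. On the good region the integrand is dominated by $\eps\, D_\gamma \bS$, so that piece contributes at most $\eps$. On the complement I bound the increment by $2\|h\|_\infty$ and split further into the temporal tail $\{\sigma \geq sr^\alpha\}$ and the spatial tail $\{\zeta \notin B_r(\zeta_0)\}$: the first piece vanishes in the limit by \eqref{eq:H**} with $\delta = r$, while the second, after extending $\sigma$ to $(0,\infty)$ and collapsing via the second identity of \eqref{eq:exchange}, is controlled by $\frac{2\|h\|_\infty}{u^\star(x)}\int_{\partial\Omega\setminus B_r(\zeta_0)} D_\gamma \bG(x,\zeta)\,d\zeta$, which tends to $0$ as $x\to\zeta_0$ by the standard Martin-kernel concentration coming from \eqref{eq:Martin estimates} (as used in \cite{AbatangeloGomezCastroVazquez2019}). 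Therefore $\limsup_{x\to\zeta_0}|I_1(x)| \leq \eps$ for every $\eps > 0$, so $I_1(x) \to 0$.

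The main obstacle I anticipate is bookkeeping the three tail regions so that each is controlled by exactly one of the available hypotheses — the $\sigma$-tails by \eqref{eq:H**}, the spatial tail by \eqref{eq:Martin estimates}, and the bulk by continuity of $h$ — while keeping the normalisation by $u^\star(x)$ tight throughout. The Fubini swaps that reduce $D_\gamma \bP_\alpha$ to $D_\gamma \bS$ are direct from \eqref{eq:exchange}, and once the averaging structure is in place the passage to the limit is essentially routine.
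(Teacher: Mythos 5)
Your proposal is correct and follows essentially the same strategy as the paper's proof: use \eqref{eq:exchange} to reduce $D_\gamma\bP_\alpha$ to $D_\gamma\bS$, normalise by $u^\star(x)$ to build an averaging kernel (the paper calls it $\Upsilon$), and then split the error $|\Heat[0,0,h]/u^\star - h(t,\zeta_0)|$ into a bulk region handled by continuity of $h$, a temporal tail killed by \eqref{eq:H**}, and a spatial tail killed by the Martin-kernel concentration from \eqref{eq:Martin estimates} (as in \cite{AbatangeloGomezCastroVazquez2019}). The only presentational difference is that you perform the change of variables $\sigma = s\tau^\alpha$ explicitly and allow your temporal and spatial tails to overlap (harmless since you only need an upper bound), whereas the paper works with the time-reversed kernel $\Upsilon(t-\sigma,\cdot,\cdot)$ and a disjoint three-region partition.
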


\begin{proof} 
Using \eqref{eq:H representation}, we write down the ratio
\begin{align*}
	\frac{\mathcal H[0,0,h] (t,x) }{ u^\star (x) }  
	&=  
	\dfrac
	{ \int_0^t \int_{\partial \Omega} (D_\gamma \mathbb P_\alpha)(t - \tau,x,\zeta) h(\tau, \zeta) \diff \zeta \diff \tau }
	{  \int_{\partial \Omega} D_\gamma \G (\tilde \zeta,x) \diff \tilde \zeta   } \\
	&=
	\int_0^t \int_{\partial \Omega} 
	\frac
	{ (D_\gamma \mathbb P_\alpha)(t - \tau,x,\zeta) } 
	{  \int_{\partial \Omega} D_\gamma \G (\tilde \zeta,x) \diff \tilde \zeta   }
	h(\tau, \zeta) \diff \zeta \diff \tau 
\end{align*}
We define
\begin{equation*}
	\Upsilon(t,x,\zeta) = \frac
	{ (D_\gamma \mathbb P_\alpha)(t,x,\zeta) } 
	{  \int_{\partial \Omega} D_\gamma \G (\tilde \zeta,x) \diff \tilde \zeta   }.
\end{equation*}
Then, we notice that
\begin{equation} 
\label{eq:singular boundary Upsilon integral}
\begin{aligned}
	\int_0^t   \Upsilon(\sigma ,x,\zeta) \diff \sigma
	&= 
	\int_0^t \int_{0}^{\infty} 
	\alpha \sigma^{\alpha-1} \tau \Phi_\alpha(\tau)
	\dfrac
	{   D_\gamma  \p  (\tau \sigma^{\alpha},x,\zeta)  }
	{  \int_{\partial \Omega} \int_0^\infty D_\gamma \p (\tilde\tau, \tilde \zeta,x) \diff \tilde \zeta \diff \tilde\tau  } 
	\diff \tau
	\diff \sigma
	\\
	&= 	
	\int_0^\infty 
	\Phi_\alpha(\tau) 
	\left(
	\dfrac
	{  \int_{0}^{\tau t^\alpha}  D_\gamma  \p  (\tau \sigma^{\alpha},x,\zeta) 	\diff (\tau \sigma^\alpha)}
	{  \int_{\partial \Omega} \int_0^\infty D_\gamma \p (\tilde\tau, \tilde \zeta,x) \diff \tilde \zeta \diff \tilde\tau  } 
	\right) 
	\diff \tau
	\\
	&= 	
	\int_0^\infty
	\Phi_\alpha(\tau) 
	\dfrac
	{  \int_{0}^{\tau t^\alpha}    D_\gamma  \p  (\sigma,x,\zeta)  \diff \sigma }
	{  \int_0^\infty \int_{\partial \Omega}  D_\gamma \p (\tilde\tau, \tilde \zeta,x) \diff \tilde \zeta \diff \tilde\tau  } 
	\diff \tau
\end{aligned}
\end{equation}
We recover that
\begin{equation}
    \label{eq:integral of Upsilon}
    \forall x \in \Omega \text{ it holds that }	\int_0^\infty  \int_{\partial \Omega}  \Upsilon(\sigma ,x,\zeta) \diff \zeta \diff \sigma = 1.
\end{equation}
Notice that due to \eqref{eq:integral of Upsilon} and \eqref{eq:H**} we have that for any $t>0$,
\begin{equation}
    \label{eq:Upsilon is almost probability}
    \lim_{x \to \zeta_0} \int_0^t \int_{\partial \Omega} \Upsilon(\sigma, x,\zeta) \diff \zeta \diff \sigma = 1.
\end{equation}

We compute the following limit as $x \to \zeta_0$,
\begin{align*}
    \Bigg| \frac{\mathcal H[0,0,h] (t,x) }{ u^\star (x) } &- \left( \int_0^t \int_{\partial \Omega} \Upsilon(t-\sigma, x, \zeta) \diff \zeta \diff \sigma  \right)  h(t,\zeta_0) \Bigg| \\
    &\le \int_0^t \int_{\partial \Omega} \Upsilon(t-\sigma, x, \zeta)  |h(\sigma, \zeta) - h(t,\zeta_0)|   \diff \zeta   \diff \sigma.
\end{align*}
Assume that $h$ is continuous.
Now we split this into different parts:
\begin{enumerate}
    \item 
    Close to $(t, \zeta_0)$.
    We pick $\delta$ such that ball in $(\sigma, \zeta) \in  B(t, \delta) \times ( B(\zeta_0, \delta) \cap \partial \Omega )$ we have $|h(\sigma, \zeta) - h(t,\zeta_0)| \le \varepsilon$. Then
    \begin{equation*}
        \int_{t-\delta}^t \int_{B(\zeta_0, \delta) \cap \partial \Omega} \Upsilon(t-\sigma, x, \zeta)  |h(\sigma, \zeta) - h(t,\zeta_0)|   \diff \zeta   \diff \sigma \le \varepsilon  \int_0^\infty \int_{\partial \Omega} \Upsilon(\sigma, x , \zeta )\diff \zeta \diff \sigma = \varepsilon.
    \end{equation*}
    
    \item On $(0,t) \times ( \partial \Omega \setminus B(\zeta_0, \delta) ) $. In this region we use \eqref{eq:singular boundary Upsilon integral} to deduce that
    \begin{align*}
        \int_{0}^t \int_{\partial \Omega \setminus B(\zeta_0, \delta) } & \Upsilon(t-\sigma, x, \zeta)  |h(\sigma, \zeta) - h(t,\zeta_0)|   \diff \zeta   \diff \sigma  \\
        &\le 2 \|h\|_{L^\infty (\partial \Omega)}  
        \int_{\partial \Omega \setminus B(\zeta_0, \delta) } \int_0^\infty \Upsilon(\sigma, x, \zeta) \diff \sigma \diff \zeta \\
        &= 2 \|h\|_{L^\infty (\partial \Omega)}  
        \int_0^\infty
    	\Phi_\alpha(\tau) \diff \tau 
    	\dfrac
    	{  \int_{\partial \Omega \setminus B(\zeta_0, \delta) } \int_{0}^{\infty}    D_\gamma  \p  (\sigma,x,\zeta)  \diff \sigma \diff \zeta}
    	{  \int_0^\infty \int_{\partial \Omega}  D_\gamma \p (\tilde\tau, \tilde \zeta,x) \diff \tilde \zeta \diff \tilde\tau  } \\
    	&= 2 \|h\|_{L^\infty (\partial \Omega)}  
    	\dfrac
    	{  \int_{\partial \Omega \setminus B(\zeta_0, \delta) } D_\gamma \mathbb G(x,\zeta) \diff \zeta}
    	{   \int_{\partial \Omega}  D_\gamma \mathbb G(x, \tilde \zeta) \diff \tilde \zeta   } 
    \end{align*}
    because of the hypothesis we made above.
    Taking a smooth non-negative function $\varphi$ that takes value $1$ in $\partial \Omega \setminus B(\zeta_0, \delta)$ and $\varphi(\zeta_0) = 0$, we use that in \cite{AbatangeloGomezCastroVazquez2019} the authors prove 
    \begin{equation*}
    \lim_{x \to \zeta_0} \dfrac
	{  \int_{\partial \Omega } D_\gamma \mathbb G(x,\zeta) \varphi (\zeta) \diff \zeta}
	{   \int_{\partial \Omega}  D_\gamma \mathbb G(x, \tilde \zeta) \diff \tilde \zeta   } \to \varphi (\zeta_0) = 0.
    \end{equation*}
    
    \item Lastly, the region $(0,t-\delta) \times ( B(\zeta_0, \delta) \cap \partial \Omega)  $.
    \begin{align*}
         \int_{0}^{t-\delta}  \int_{\partial \Omega \setminus B(\zeta_0, \delta) } & \Upsilon(t-\sigma, x, \zeta)  |h(\sigma, \zeta) - h(t,\zeta_0)|   \diff \zeta   \diff \sigma  \\
         &\le 2 \|h \|_{L^\infty} \int_{0}^{t-\delta} \int_{\partial \Omega} \Upsilon(t-\sigma, x, \zeta) \diff \sigma \diff \zeta \\
         &\le 2 \|h \|_{L^\infty} \int_{\delta}^{t} \int_{\partial \Omega} \Upsilon(\sigma, x, \zeta) \diff \sigma \diff \zeta.
    \end{align*}    
    Now we notice that
    \begin{align*}
        \int_{\delta}^{t} &\int_{\partial \Omega} \Upsilon(\sigma, x, \zeta) \diff \sigma \diff \zeta  = \int_{0}^{t } \int_{\partial \Omega} \Upsilon(\sigma, x, \zeta) \diff \sigma \diff \zeta - \int_{0}^{\delta} \int_{\partial \Omega} \Upsilon(\sigma, x, \zeta) \diff \sigma \diff \zeta\\
        &=\int_0^\infty
	\Phi_\alpha(\tau) 
	\dfrac
	{  \int_{\tau \delta^\alpha}^{\tau t^\alpha} \int_{\partial \Omega}   D_\gamma  \p  (\sigma,x,\zeta) \diff \zeta \diff \sigma }
	{  \int_0^\infty \int_{\partial \Omega}  D_\gamma \p (\tilde\tau, \tilde \zeta,x) \diff \tilde \zeta \diff \tilde\tau  } 
	\diff \tau \diff \sigma \\
    &\le \int_0^\infty
	\Phi_\alpha(\tau) 
	\dfrac
	{  \int_{\tau \delta^\alpha}^{\infty} \int_{\partial \Omega}   D_\gamma  \p  (\sigma,x,\zeta) \diff \zeta \diff \sigma }
	{  \int_0^\infty \int_{\partial \Omega}  D_\gamma \p (\tilde\tau, \tilde \zeta,x) \diff \tilde \zeta \diff \tilde\tau  } 
	\diff \tau \diff \sigma 
    \end{align*} 
    As $x \to \zeta_0$ this converges to $0$ due to \eqref{eq:H**}.
\end{enumerate}
We have proved that, for any $\varepsilon > 0$ we have
\begin{equation*}
    \lim_{x \to \zeta_0} 
    \Bigg| \frac{\mathcal H[0,0,h] (t,x) }{ u^\star (x) } - \left( \int_0^t \int_{\partial \Omega} \Upsilon(t-\sigma, x, \zeta) \diff \zeta \diff \sigma  \right)  h(t,\zeta_0) \Bigg| \le \varepsilon .
\end{equation*}
Recalling \eqref{eq:Upsilon is almost probability} the proof is finished. 
\end{proof}

\appendix 
\section{Heat kernel estimates for the examples}
\label{sec:heat kernel examples}

For the heat kernel of $\partial_t u +\Ls u = f$ has the following estimates:
\begin{enumerate}
	\item For the heat kernel of the fractional Laplacian in the whole space known that
	$$
	p(t,x,y) \asymp \left(  t^{-\frac{d}{2s} }  \wedge \frac{t}{|x-y|^{d+2s}} \right)
	\asymp
	t^{-\frac{d}{2s}}
	\left(
	    1\wedge
	    \frac{t^{\frac{1}{2s}}}{|x-y|}
	\right)^{d+2s}.
	$$
    They can be recovered from computing the inverse Fourier transform of $e^{-t|\xi|^{2s}}$. It is particularly interesting to point out that for $s = \frac 12$ and $d = 1$ we have density function of Cauchy distribution
    $$
        p(t,x,y) = \frac 1 \pi \frac{t}{|x-y|^2 + t^2}.
    $$
	
	\item Restricted Fractional Laplacian with Dirichlet exterior condition (see \cite{ChenKimSong2010,Bogdan2010})
	
	$$ 
	\mathbb S_{RFL} (t,x,y) \asymp \left( 1 \wedge \frac{\delta(x)}{ t^{\frac{1}{2s}}} \right)^{s}
	\left( 1 \wedge \frac{\delta(y)}{t^{\frac{1}{2s}}} \right)^{s} p(t,x,y).
	$$
	
	\item Censored (or Regional) Fractional Laplacian with Dirichlet boundary condition for $s\in(\frac12,1)$ as can be found in \cite{Chen2009}
	
	$$ 
	\mathbb S_{CFL} (t,x,y) \asymp \left( 1 \wedge \frac{\delta(x)}{ t^{\frac{1}{2s}}} \right)^{2s-1}
	\left( 1 \wedge \frac{\delta(y)}{t^{\frac{1}{2s}}} \right)^{2s-1} p(t,x,y).
	$$
	
	\item For the spectral fractional Laplacian we have (see \cite{Song2004,Song_2020})
	
	$$
	\mathbb S_{SFL} (t,x,y) \asymp \left( 1 \wedge \frac{\delta(x)}{|x-y| + t^{\frac {1}{2s}} } \right)  \left( 1 \wedge \frac{\delta(y)}{|x-y| + t^{\frac {1}{2s}} } \right) p(t,x,y).
	$$

\end{enumerate}

\section*{Acknowledgements}
HC has received funding from the Swiss National Science Foundation under the Grant PZ00P2\_\-202012/1. DGC was partially supported by PID2021-127105NB-I00 from the Spanish Government and the Advanced Grant Nonlocal-CPD (Nonlocal PDEs for Complex Particle Dynamics: Phase Transitions, Patterns and Synchronization) of the European Research Council Executive Agency (ERC) under the European Union’s Horizon 2020 research and innovation programme (grant agreement No. 883363).
JLV was funded by grants PGC2018-098440-B-I00 and PID2021-127105NB-I00 from the Spanish Government. He is an Honorary Professor at Univ. Complutense de Madrid and member of IMI. 

\printbibliography

	\end{document}